\newtheorem {theorem} {Theorem}%[section]
\newtheorem {proposition} [theorem]{Proposition}
\newtheorem {corollary} [theorem]{Corollary}
\newtheorem {question} [theorem]{Question}
\newtheorem*{TheoremA}{Theorem A}
\newtheorem{Defi}[theorem]{Definition}
\title{\large\bf On the Myrberg Limit Sets and Bowen-Margulis-Sullivan Measures for Visibility Manifolds without Conjugate Points}
\author{
\and Fei Liu
\thanks{College of Mathematics and Systems Science, Shandong University of Science and Technology, Qingdao, 266590, P.R. China.
e-mail: liufei$@$math.pku.edu.cn. }}
\date{\today}
\begin{document}
\maketitle

\begin{abstract}
   In this paper, we clarify the strong relationship between Myrberg type dynamics and the ergodic properties of the geodesic flows
on (not necessarily compact) uniform visibility manifolds without conjugate points.
We prove that the positivity of the Patterson-Sullivan measure of the Myrberg limit set is equivalent to the conservativity of the geodesic flow
with respect to the Bowen-Margulis-Sullivan measure.
Moreover we show that the Myrberg limit set is a full Patterson-Sullivan measure subset of the conical limit set.
These results extend the classical works of P. Tukia and B. Stratmann from hyperbolic manifolds to the manifolds without conjugate points.
\\

\noindent {\bf Keywords and phrases:}  Myrberg limit set, Bowen-Margulis-Sullivan measures, conservativity, geodesic flows, manifolds without conjugate points \\

\noindent {\bf AMS Mathematical subject classification (2020):}
37D40, 37A10.
\end{abstract}

%\tableofcontents

\section{\bf Introduction and Main Results}\label{intro}
\setcounter{section}{1}
\setcounter{equation}{0}\setcounter{theorem}{0}

This paper is a sequel to our previous paper~\cite{LLW},
with the aim of clarifying a fundamental aspect of the chaotic nature of the geodesic flows on the manifolds without conjugate points.

The theory of manifolds without conjugate points is one of the most challenging research topics in geometry and dynamical systems.
On the one hand, from the topological perspective, the absence of conjugate points results in strong restrictions on the topology of the manifolds.
However, the other side of the coin is, the fact that ``there are no conjugate points" provides (almost) no useful information on the local geometry of the manifold;
however, when studying the dynamics of geodesic flows, one needs to do a series of estimations using the local geometry.
Therefore, the dynamical aspects for the theory of manifolds without conjugate points is an important and highly challenging research topic.
For recent progresses on the various aspects of the dynamics of the manifolds without conjugate points, see the papers~\cite{CKP,CKW,GR,Ma,MR,PR,RR,Wu1,Wu3},
which contain many original ideas and utilize lots of new tools.

Let $(M,g)$ be a complete Riemannian manifold, and $(\widetilde{M},\widetilde{g})$ be its universal cover.
We use $d$ to denote the distance function induced by both $g$ on $M$ and $\widetilde{g}$ on $\widetilde{M}$ when there is no confusion.
Throughout this paper we always assume that the geodesics both in $M$ and $\widetilde{M}$ are of unit speed.
We denote by $\Gamma=\pi_1(M)$, i.e., $M=\Gamma\backslash\widetilde{M}$. $\Gamma$ can be viewed as a discrete subgroup of the isometry group of $\widetilde{M}$.

Let $T^1 M$ and $T^1\widetilde{M}$ denote the unit tangent bundles of $M$ and $\widetilde{M}$, respectively.
To simplify the notations, we use $\pi$ to denote the standard projection map from unit tangent bundle to the manifold, both for $T^1 M$ and $T^1\widetilde{M}$.
For any $v\in T^1 M$ or $v\in T^1 \widetilde{M}$, we use $c_{v}$ denote the unique unit speed geodesic with the initial conditions $c_{v}(0)=\pi(v)$ and $c'_{v}(0)=v$.
We use $$v\mapsto \phi_{t}(v)=c'_{v}(t)$$ to denote the $\mathbf{geodesic~flows}$ on the unit tangent bundles, both for $T^1 M$ and $T^1\widetilde{M}$.

One of the main results of our previous paper~\cite{LLW} is the following Hopf-Tsuji-Sullivan (HTS) dichotomy for visibility manifolds without conjugate points,
which says that the geodesic flow is either conservative or completely dissipative with respect to the Bowen-Margulis-Sullivan measure.
All the notions will be described in details in Section~\ref{geometry}. For more information about the HTS dichotomy,
refer to~\cite{LLW},~\cite{PPS} and \cite{Ro}, which contain further details.

\begin{theorem}[HTS Dichotomy for Visibility Manifolds~\cite{LLW}]\label{HTS}
    Let $M$ be a complete uniform visibility manifold without conjugate points that satisfies Axiom 2, and ${\{\mu_q\}}_{q\in\widetilde{M}}$
    be an $r$-dimensional Busemann density $(r\in \mathbb{R})$,
    $p\in\widetilde{M}$ is arbitrarily chosen and $\mathfrak{m}$ is the Bowen-Margulis-Sullivan (BMS) measure.
    Then either the geodesic flow $\phi_t$ is conservative or completely dissipative with respect to $\mathfrak{m}$ (namely, the HTS dichotomy holds). Moreover, the following statements are equivalent:
	\begin{enumerate}
		\item[1.] $\mu_p(L_c(\Gamma))=\mu_p(\widetilde{M}(\infty))$.
		\item[2.] The geodesic flow $\phi_t: T^{1}M \rightarrow T^{1}M $ is conservative with respect to the BMS measure $\mathfrak{m}$.
		\item[3.] The geodesic flow $\phi_t: T^{1}M \rightarrow T^{1}M $ is ergodic with respect to the BMS measure $\mathfrak{m}$.
		\item[4.] The $\Gamma$-action on $\widetilde{M}(\infty)\times\widetilde{M}(\infty)$ is ergodic with respect to $\mu_p\times\mu_p$.
		\item[5.] The Poincar\'e series $\sum_{\alpha\in\Gamma}\mathrm{e}^{-r \cdot d(p,\alpha p)}$ diverges.
	\end{enumerate}
\end{theorem}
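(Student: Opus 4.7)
The plan is to follow the classical Hopf--Tsuji--Sullivan--Roblin scheme, adapted to the uniform visibility setting without conjugate points. The backbone is a cyclic chain of implications $(5)\Rightarrow(1)\Rightarrow(2)\Rightarrow(3)\Rightarrow(4)\Rightarrow(5)$, together with the overall dichotomy (conservative versus completely dissipative), which one obtains via a Hopf-style zero-one argument: the set of positively recurrent vectors in $T^1M$ is $\phi_t$-invariant, and one shows that any $\phi_t$-invariant subset of a BMS-conull set must have $\mathfrak{m}$-measure either zero or full, so conservativity fails precisely in the completely dissipative regime.

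For the individual implications, the crucial geometric tool to set up is a version of Sullivan's shadow lemma in this setting: for a fixed radius $R$, the shadow at $p$ of a ball $B(\alpha p, R)$ on $\widetilde{M}(\infty)$ should have $\mu_p$-mass comparable, up to multiplicative constants, to $e^{-r\cdot d(p,\alpha p)}$. Granted this, $(5)\Rightarrow(1)$ becomes a Borel--Cantelli argument: divergence of the Poincar\'e series forces $\mu_p$-almost every point of $\widetilde{M}(\infty)$ to lie in infinitely many such shadows, hence to be a conical limit point. For $(1)\Rightarrow(2)$, one uses the characterization that a vector $v\in T^1 M$ is positively recurrent iff the forward endpoint $c_v(+\infty)$ of a lift is a conical limit point; combining this with the product description of the BMS measure on $T^1\widetilde{M}$ in terms of $\mu_p\times\mu_p$ and a time parameter, positive $\mu_p$-measure of $L_c(\Gamma)$ translates into conservativity of $\phi_t$ on $T^1 M$. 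The implication $(2)\Rightarrow(3)$ is the classical Hopf ergodic argument, comparing forward and backward Birkhoff averages of compactly supported continuous functions and showing that these depend, respectively, only on the future and past endpoint of the geodesic on $\widetilde{M}(\infty)$. The step $(3)\Rightarrow(4)$ is essentially formal from the same product structure, while $(4)\Rightarrow(5)$ is proved contrapositively: convergence of the Poincar\'e series together with the shadow lemma yields $\mu_p(L_c(\Gamma))=0$, from which one cooks up a non-trivial $\Gamma$-invariant subset of $\widetilde{M}(\infty)\times\widetilde{M}(\infty)$ that obstructs ergodicity.

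The hard part will be compensating for the absence of CAT$(-1)$ geometry. The classical proofs lean on exponential contraction of horospheres, the Anosov property of $\phi_t$, and the $C^2$ regularity of Busemann functions, none of which is available here. The uniform visibility hypothesis is what replaces CAT$(-1)$ in the proof of the shadow lemma and in the comparison between Busemann cocycles and Gromov products, while Axiom $2$ is invoked to guarantee that Busemann functions converge uniformly on compact sets, which is what is needed to make the product decomposition of $\mathfrak{m}$ work. The most delicate step is the Hopf argument $(2)\Rightarrow(3)$: lacking stable/unstable plaques with nice absolute continuity properties, one has to replace the Anosov-style local product coordinates by a purely geometric argument that transports convergence of Birkhoff averages between asymptotic geodesic rays, using visibility and the equicontinuity of Busemann functions as substitutes for exponential contraction. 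This geometric Hopf argument, together with the production of a shadow lemma of sufficient precision in the visibility setting, is where I expect the bulk of the technical work to lie.
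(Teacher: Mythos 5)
First, note that this paper does not actually prove Theorem~\ref{HTS}: it is imported verbatim from the companion paper \cite{LLW}, so there is no in-text argument to compare yours against. Your outline does reproduce the standard Roblin--Sullivan scheme that \cite{LLW} follows (shadow lemma, conical-limit characterization of conservative vectors, Hopf argument for ergodicity), and your identification of the Hopf step $(2)\Rightarrow(3)$ and of a shadow lemma as the technical core is accurate; the Mohsen shadow lemma is indeed stated in this paper as Proposition~\ref{shadow}, again quoted from \cite{LLW}.

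Two points in your plan need repair. The step $(5)\Rightarrow(1)$ is not ``a Borel--Cantelli argument'': the first Borel--Cantelli lemma only yields the contrapositive direction (convergence of the Poincar\'e series forces $\mu_p(L_c(\Gamma))=0$, since $L_c(\Gamma)$ is contained in a limsup of shadows of summable measure). To pass from divergence to full measure of $L_c(\Gamma)$ you need either a Kochen--Stone/second-moment argument, which requires an upper bound on the $\mu_p$-measure of intersections of pairs of shadows (a quasi-independence estimate that does not follow from the shadow lemma alone), or you must instead prove divergence $\Rightarrow$ conservativity of the $\Gamma$-action on $\widetilde{M}^2(\infty)$ directly via the Hopf--Halmos recurrence criterion and then deduce (1) from (2). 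Second, Axiom 2 is not about uniform convergence of Busemann functions: its role is to guarantee that each pair $(\xi,\eta)\in\widetilde{M}^2(\infty)$ is joined by a \emph{unique} geodesic (the flat strip theorem fails without conjugate points, by Burns's example cited in the paper), which is what makes the Hopf parametrization, and hence the very definition of $\widetilde{\mathfrak{m}}$ as an integral over $\widetilde{M}^2(\infty)$, well posed. Without that, the product decomposition on which your implications $(1)\Rightarrow(2)$ and $(3)\Leftrightarrow(4)$ rest does not exist.
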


The $L_{c}(\Gamma)$ appearing in the above Theorem is called the conical limit set, which plays a central role in the dynamics of the geodesic flows.

In this paper, we consider another important type of limit set, Myrberg limit set $L_{m}(\Gamma)$,
which was first introduced by Finnish mathematician Pekka Juhana Myrberg in~\cite{My}.
By definition, compared to the conical limit points, the Myrberg limit points appear more special.
In fact, it is well known that for the hyperbolic manifolds,
Myrberg limit set is a subset of conical limit set, and even a proper subset of conical limit set (cf. \cite{Tu}).
In Proposition~\ref{myr} of Section~\ref{geometry}, we show that for the visibility manifolds without conjugate points, this inclusion relationship still holds.
One of the key points of the proof is our previous estimations of the distance functions on the visibility manifolds without conjugate points.

One of the main results of this paper is the following result.

\begin{TheoremA}[Myrberg Type Dichotomy,~Theorem~\ref{Myr}]\label{ThmA}
Let $M$ be a complete uniform visibility manifold without conjugate points that satisfies Axiom 2,
${\{\mu_q\}}_{q\in\widetilde{M}}$ be a Patterson-Sullivan measure, $p\in\widetilde{M}$ is arbitrarily chosen and $\mathfrak{m}$ is the
corresponding $\delta_{\Gamma}$-dimensional BMS measure. Then the following assertions are equivalent:
\begin{enumerate}
		\item[1.] The Myrberg limit set $L_{m}(\Gamma)$ satisfies that $\mu_{p}(L_{m}(\Gamma)) >0$.
		\item[2.] The geodesic flow $\phi_t: T^{1}M \rightarrow T^{1}M $ is conservative with respect to the BMS measure $\mathfrak{m}$.
	\end{enumerate}
\end{TheoremA}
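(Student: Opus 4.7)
My plan is to reduce the equivalence to the HTS dichotomy (Theorem~\ref{HTS}) together with the inclusion $L_m(\Gamma)\subseteq L_c(\Gamma)$ established in Proposition~\ref{myr}. The implication $(1)\Rightarrow(2)$ should follow almost immediately by ruling out complete dissipativity; the main content is $(2)\Rightarrow(1)$, where I will use the ergodicity supplied by HTS to show that $\mu_p$-a.e.\ conical limit point is in fact Myrberg.

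\textbf{The easy direction $(1)\Rightarrow(2)$.} Suppose $\mu_p(L_m(\Gamma))>0$. By Proposition~\ref{myr} this gives $\mu_p(L_c(\Gamma))>0$. Via the Hopf parametrization, $\mathfrak{m}$-a.e.\ conical vector (i.e.\ a vector with forward endpoint in $L_c(\Gamma)$ and backward endpoint also conical) projects to a recurrent orbit in $T^1M$, so the set of recurrent vectors in $T^1 M$ has positive $\mathfrak{m}$-measure. Complete dissipativity would force this set to be $\mathfrak{m}$-null, a contradiction; by the HTS dichotomy only the conservative alternative remains.

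\textbf{The main direction $(2)\Rightarrow(1)$.} Assume $\phi_t$ is conservative. Theorem~\ref{HTS} then provides ergodicity of $\phi_t$ on $(T^1M,\mathfrak{m})$ and the identity $\mu_p(L_c(\Gamma))=\mu_p(\widetilde{M}(\infty))$. By Birkhoff's theorem there is a full-$\mathfrak{m}$-measure set $D\subseteq T^1M$ of vectors whose forward $\phi_t$-orbit equidistributes and is therefore dense in the support of $\mathfrak{m}$. Lifting to a $\Gamma$- and $\phi_t$-invariant set $\widetilde{D}\subseteq T^1\widetilde{M}$ and using the Hopf parametrization $d\widetilde{\mathfrak{m}}\propto d\mu_p(v^-)\,d\mu_p(v^+)\,dt$, the set $\widetilde{D}$ descends to a $\Gamma$-invariant set $\widetilde{D}'\subseteq\widetilde{M}(\infty)^2$ of full $\mu_p\times\mu_p$-measure. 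A Fubini argument then provides, for $\mu_p$-a.e.\ $\xi\in L_c(\Gamma)$, a point $\eta$ with $(\eta,\xi)\in\widetilde{D}'$, i.e.\ a geodesic ending at $\xi$ whose projection to $T^1M$ has dense orbit in $\mathrm{supp}(\mathfrak{m})$. Because any two geodesics in $\widetilde{M}$ sharing the forward endpoint $\xi$ are strongly asymptotic---this is where I would invoke uniform visibility and the distance estimates of~\cite{LLW}---the $p$-to-$\xi$ geodesic inherits the same dense-orbit property in $T^1 M$. This density is precisely the dynamical incarnation of the Myrberg condition for $\xi$, so $\mu_p(L_m(\Gamma))=\mu_p(L_c(\Gamma))>0$.

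\textbf{Principal obstacle.} The most delicate step will be the final translation between the dynamical statement ``the forward $\phi_t$-orbit of the $p$-to-$\xi$ geodesic is dense in the non-wandering set'' and the geometric Myrberg condition ``every pair of distinct boundary points is approximated by $\Gamma$-translates of the $(p,\xi)$-axis''. In the strictly negatively curved setting this equivalence is routine from exponential asymptoticity of rays; here I expect to have to combine the uniform visibility property (to identify shadows of non-wandering orbits with arbitrary pairs of ideal points) with the distance estimates developed in~\cite{LLW} (to quantify asymptoticity of geodesics in the absence of any curvature bound) in order to make the Fubini-plus-asymptoticity argument survive passage through the Hopf parametrization.
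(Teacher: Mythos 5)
Your proof follows essentially the same route as the paper's: $(1)\Rightarrow(2)$ via Proposition~\ref{myr} and the HTS dichotomy (Theorem~\ref{HTS}), and $(2)\Rightarrow(1)$ by showing that conservativity plus ergodicity force $\mathfrak{m}$-a.e.\ forward orbit to be dense in $\mathrm{supp}(\mathfrak{m})$, and then identifying forward-orbit density with the Myrberg property of the forward endpoint via the quasi-product structure of $\widetilde{\mathfrak{m}}$. One step needs repair: you cannot obtain the full-measure set $D$ of dense forward orbits from Birkhoff equidistribution, because $\mathfrak{m}$ is not assumed finite in this theorem --- conservativity does not imply finiteness, and the infinite-measure case is precisely the one the dichotomy is designed to cover. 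The correct substitute is what the paper does: for each positive-measure set $A$ in a countable basis of $\mathrm{supp}(\widetilde{\mathfrak{m}})$, the Halmos recurrence property of a conservative flow shows that the invariant set $\mathfrak{L}(A)$ of vectors whose forward orbit re-enters the $\Gamma$-orbit of $A$ at arbitrarily large times has positive measure, ergodicity upgrades this to full measure, and intersecting over the countable basis yields $D$. With that replacement your Fubini-plus-asymptoticity argument is sound; indeed, your explicit attention to why density transfers from one geodesic ending at $\xi$ to the $p$-to-$\xi$ ray (via the bounded-distance estimate of Proposition~\ref{prop_2_7}(4) and uniform visibility) addresses a point the paper passes over, since the paper simply asserts the identity $L_m(\Gamma)=\{c_v(+\infty)\mid v\in\bigcap_{A}\mathfrak{L}(A)\}$ without proof. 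Note also that Proposition~\ref{prop_2_7}(4) gives only bounded distance, not strong asymptoticity, so the correct conclusion to draw is convergence of the relevant ideal endpoints (which is all the Myrberg condition requires), rather than literal equality of orbit closures in $T^1M$.
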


This theorem was first given by Pekka Tukia in his classical paper~\cite{Tu} for hyperbolic manifolds (see also~\cite{St}).
Based on a new observation for hyperbolic manifolds, Kurt Falk provided an elegant new proof of this conclusion in~\cite{Fa}.

Furthermore, we prove that when the geodesic flow $\phi_t: T^{1}M \rightarrow T^{1}M $ is conservative with respect to the BMS measure $\mathfrak{m}$,
the Myrberg limit set $L_{m}(\Gamma)$ has full $\mu_{p}$-measure in the conical limit set $L_{c}(\Gamma)$ (Corollary~\ref{cor}).
As a concequence,
the first item of Theorem A can be replaced by ``The Myrberg limit set $L_{m}(\Gamma)$ has full $\mu_{p}$-measure".
In Section~\ref{BMS}, we also provide a characterization of the non-wandering set of the geodesic flow and prove the uniqueness of the BMS measures.

This paper is organized as follows. In Section~\ref{geometry} we introduce the notions and results which are required in the sequel,
and provide some interesting conclusions in geometry and the theory of dynamical systems.
In Section~\ref{BMS} we discuss the Myrberg limit set and Bowen-Margulis-Sullivan measures on visibility manifolds in detail, and prove the main conclusions of this paper.
In Section~\ref{question}, we list some related questions for further study.
%\begin{remark}
%In a recent preprint \cite{GR}, K.~Gelfert and R.~Ruggiero proved that the geodesic flow on a compact surface without focal points has a unique measure of maximal entropy. Their idea is to consider the time-preserving semi-conjugacy from the geodesic flow to a continuous expansive flow with a local product structure, which has a unique measure of maximal entropy.
%\end{remark}

\section{\bf{Geometry of Visibility Manifolds}}\label{geometry}
In this section, we summarize the notions that we needed, and we also investigate some geometric properties of visibility manifolds.

Let $c$ be a geodesic in $M$. We say two points $p_1=c(t_1)$ and $p_2=c(t_2)$ on the geodesic $c$ are $\mathbf{conjugate}$,
if there exists a non-trivial Jacobi field $\bm{J}$, such that
$$\bm{J}(t_1)=0=\bm{J}(t_2).$$

$M$ is called a $\mathbf{manifold~without~conjugate~points}$ if no geodesic on $M$ admits conjugate points.
By definition, it's easy to see that non-positively curved manifolds have no conjugate points,
and meanwhile the standard two dimensional sphere $S^{2}$ has conjugate points since the antipodal points are conjugate to each other.
But the absence of conjugate points does not necessarily imply the manifold is non-positively curved.
In fact, there are examples of manifolds without conjugate points that admit some regions with positive sectional curvature (cf.~\cite{Gu}).

In a sense, the condition of ``without conjugate points" is so broad that we need to add some additional conditions to derive important geometric properties
and fine (globally and locally) estimations. Visibility is such a condition, which was first introduced by Patrick Eberlein.

$\widetilde{M}$ is called a $\mathbf{visibility~manifold}$ if for any $p\in\widetilde{M}$ and $\epsilon>0$, there exists a
constant $R_{p,\epsilon}>0$, such that for any geodesic (segment) $c:[a,b] \to \widetilde{M}$, $d(p,c)\geq R_{p,\epsilon}$ implies that $\measuredangle_p(c(a),c(b))\leq\epsilon$. Here we allow $a$ and $b$ to be infinity.
If the constant $R$ is independent of the choice of the point $p$,
$\widetilde{M}$ is called a $\mathbf{uniform~visibility~manifold}$. $M$ is called a
$\mathbf{(uniform)~visibility~manifold}$ if its universal cover $\widetilde{M}$ does.

Visibility is a very natural condition. It is proved in \cite{BO} that the manifolds with negative upper-bound sectional curvature are uniform visibility manifolds.
On the other hand, there are many visibility manifolds admit some regions with zero and positive sectional curvature. For example,
a closed surface without conjugate points and genus greater than $1$ is a uniform visibility manifold.
For more information, see \cite{Eb1}, which contains almost all the useful geometric properties about visibility manifolds without conjugate points.

By Cartan-Hadamard theorem we know that $\widetilde{M}$ is diffeomorphic to $\mathbb{R}^{n}$ where $n=\mathrm{dim} M$.
We will add a boundary to $\widetilde{M}$, and make it a compact space under the so called cone topology.

Two geodesics $c_{1}$ and $c_{2}$ in $\widetilde{M}$ are called $\mathbf{positively~asymptotic}$,
if there is a positive constant $C$ such that
$$d(c_1(t),c_2(t))\leq C, \quad\forall t\geq 0.$$
The positively asymptotic is an equivalence relation among geodesics on $\widetilde{M}$,
the set of the equivalence classes is called the $\mathbf{ideal~boundary}$ and is denoted by $\bf{\widetilde{M}(\infty)}$.

If $\widetilde{M}$ is a visibility manifold without conjugate points, Eberlein (cf. \cite{Eb1}) showed that for any point $p\in\widetilde{M}$ and $\xi\in\widetilde{M}(\infty)$, there exists a unique $v\in T^{1}_{p}\widetilde{M}$ such that
$c_{v}(+\infty)=\xi$, where $c_{v}$ is the unique geodesic satisfies $c(0)=p$ and $c'(0)=v$.
Therefore $\widetilde{M}(\infty)$ is homeomorphic to $T^{1}_{p}\widetilde{M}$,
which is homeomorphic to the $(n-1)$-dimensional unit sphere $\mathbb{S}^{n-1}$.

Given two different points $x, y\in\mathbf{\overline{\widetilde{M}}}\triangleq \widetilde{M}\cup\widetilde{M}(\infty)$
and let $\mathbf{c_{x,y}}$ be the unique geodesic connecting $x$ and $y$, if in addition $x\in \widetilde{M}$, we parametrize the geodesic $c_{x,y}$
by $c_{x,y}(0)=x$. We list the following notations of the angles between geodesics:
\begin{displaymath}
	\begin{aligned}
		\measuredangle_p(x,y)      & =\measuredangle(c'_{p,x}(0),c'_{p,y}(0)),\quad x,y\neq p;                        \\
		\measuredangle_p(x,\bm{v}) & =\measuredangle(c'_{p,x}(0),\bm{v}),\quad \bm{v}\in T_p^1\widetilde{M}, ~x\neq p; \\
        C(\bm{v},\epsilon)    & =\{x\in \overline{\widetilde{M}} - \{p\}\mid \measuredangle_p(x,\bm{v})<\epsilon\}, ~~\bm{v}\in T_p^1\widetilde{M};                                                                     \\
		C_\epsilon(\bm{v})    & =\{c_{\bm{w}}(+\infty)\mid \bm{w}\in T^1_p\widetilde{M},\measuredangle(\bm{v},\bm{w})<\epsilon\}\subset\widetilde{M}(\infty), ~~\bm{v}\in T_p^1\widetilde{M};\\
		TC(\bm{v},\epsilon,r) & =\{x\in\widetilde{M}\cup\widetilde{M}(\infty)\mid \measuredangle_p(x,\bm{v})<\epsilon, d(p,x)> r\},
~~\bm{v}\in T_p^1\widetilde{M};\\
	\end{aligned}
\end{displaymath}

The last one $TC(\bm{v},\epsilon,r)$ is called the $\mathbf{truncated~cone~with~axis~\bm{v}~and~angle~\epsilon}$.
For any point $\xi\in\widetilde{M}(\infty)$, the set of truncated cones containing this point actually forms local bases and hence, forms the bases for a topology $\tau$. This topology is unique and usually called the $\mathbf{cone~topology}$. Under this topology,
$\overline{\widetilde{M}}$ is homeomorphic to the n-dimensional unit closed ball in $\mathbb{R}^{n}$.
More precisely, for any $p\in \widetilde{M}$, let $B_{p}$ be the closed unit ball in the tangent space $T_{p}\widetilde{M}$, i.e.,
$B_{p} \triangleq \{ v \in T_{p}\widetilde{M} \mid \parallel v\parallel \leqslant 1\}$,
then if $M$ is a visibility manifold without conjugate points, the following map
\[
h:B_{p} \rightarrow \overline{\widetilde{M}},~~~ v \mapsto h(v) =
\left\{
\begin{aligned}
\exp_{p}(\frac{v}{1-\parallel v\parallel}) & \quad \text{if }  \parallel v \parallel < 1, \\
c_{v}(+\infty)~~~~~~~~~ & \quad \text{if } \parallel v \parallel = 1,
\end{aligned}
\right.
\]
is a homeomorphism under the cone topology.
For more details, refer to~\cite{Eb1,EO}.

Denote by $L(\Gamma)\triangleq\widetilde{M}(\infty)\cap\overline{\Gamma(p)}$,
where $\overline{\Gamma(p)}$ is the closure of the orbit of the $\Gamma$-ation at $p$ under the cone topology.
$L(\Gamma)$ is called the $\mathbf{limit~set}$ of $\Gamma$.
Due to the visibility axiom, one can check that $L(\Gamma)$ is independent of the choice of the point $p$.
$\Gamma$ is called $\mathbf{non}$-$\mathbf{elementary}$ if $\#L(\Gamma)=\infty$.

It's easy to see that the limit set $L(\Gamma)$ is a $\Gamma$-invariant closed subset of the ideal boundary.
In fact it is precisely the set of points in $\overline{\widetilde{M}}$ where the proper discontinuity fails (cf. \cite{EO}).

Two (not necessarily distinct) points $\xi, \eta \in \widetilde{M}(\infty)$ are called $\mathbf{\Gamma}$-$\mathbf{dual}$,
if there exists a sequence $\{\alpha_{n}\}^{\infty}_{n=1}\subset \Gamma$ and a point $p \in \widetilde{M}$
(hence for all points $p \in \widetilde{M}$ due to visibility) such that under the cone topology,
$$\alpha^{-1}_{n}(p)\rightarrow \xi, ~~~\alpha_{n}(p)\rightarrow \eta.$$

For the visibility manifolds without conjugate points, we have the following properties.

\begin{proposition}\label{prop_2_7}
	Let $\widetilde{M}$ be a simply connected uniform visibility manifold without conjugate points.
	\begin{enumerate}
   \item\cite{Eb1} The following map is continuous:
		\begin{displaymath}
			\begin{aligned}
				\Psi:~& T^1\widetilde{M}\times[-\infty,\infty] & \to     &~\widetilde{M}\cup\widetilde{M}(\infty), \\
				       & \qquad(\bm{v},t)                       & \mapsto &~c_{\bm{v}}(t).
			\end{aligned}
		\end{displaymath}
    \item\cite{Eb1} For any two points $\xi\neq\eta$ on the ideal boundary $\widetilde{M}(\infty)$,
        there exists at least one connecting geodesic from $\xi$ to $\eta$.
    \item\cite{Eb1} If $\Gamma$ is non-elementary, any two points in $L(\Gamma)$ are $\Gamma$-dual.
	\item\cite{LW} For any two positively asymptotic geodesics $c_1$ and $c_2$,
		\begin{displaymath}
		    d(c_1(t),c_2(t))\leq 2R_{\frac{\pi}{2}}+3d(c_1(0),c_2(0)),~~~t>0.	
		\end{displaymath}
        Here $R_{\frac{\pi}{2}}$ is the uniform visibility constant.
    \item\cite{LWW} For any $\bm{v}\in T^1{\widetilde{M}}$ and positive constants $R,\epsilon$,
        there is a constant $L=L(\epsilon, R)$, such that for any $t>L$,
        \begin{displaymath}
            B(c_{\bm{v}}(t),R)\subset C(\bm{v},\epsilon).
        \end{displaymath}
        Here $B(c_{\bm{v}}(t),R)$ is the open ball centered at $c_{\bm{v}}(t)$ with radius $R$.
	\end{enumerate}
\end{proposition}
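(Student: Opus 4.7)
The plan is to sketch how the visibility axiom and cone topology drive each of the five assertions, noting that they are attributed to prior work and hence the emphasis is on the geometric ideas rather than full detail.

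For item (1), continuity of $\Psi$ on $T^1\widetilde{M}\times\mathbb{R}$ is standard smooth dependence on initial conditions; the content lies at $t=\pm\infty$. Given $(\bm{v}_n,t_n)\to(\bm{v},+\infty)$, I would fix a truncated cone neighborhood $TC(\bm{v},\epsilon,r)$ of $c_{\bm{v}}(+\infty)$. Continuous dependence of geodesics on initial vectors places $c_{\bm{v}_n}(t_n)$ near $c_{\bm{v}}(t_n)$ for large $n$, while the uniform visibility axiom places $c_{\bm{v}}(t_n)$ inside $TC(\bm{v},\epsilon,r)$ once $t_n$ is large, yielding convergence in the cone topology.

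For items (2) and (3), fix a base point $p$. Given distinct $\xi,\eta\in\widetilde{M}(\infty)$, I would take approaching sequences $p_n\to\xi$ along the ray $c_{\bm{v}_{\xi}}$ and $q_n\to\eta$ along $c_{\bm{v}_{\eta}}$, and consider the connecting geodesic segments $\sigma_n=c_{p_n,q_n}$. Compactness of $T^1_p\widetilde{M}$ (after reparametrizing $\sigma_n$ by a closest point to $p$) yields subsequential convergence of the initial vectors, and the visibility axiom forbids the limiting biinfinite geodesic from missing either endpoint, giving (2); without visibility the $\sigma_n$ could ``escape'' to infinity while the endpoints remain distinct. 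For (3), if $\xi,\eta\in L(\Gamma)$ I would pick $\alpha_n\in\Gamma$ with $\alpha_n(p)\to\eta$, extract a subsequence so that $\alpha_n^{-1}(p)\to\xi'$ by compactness of $\overline{\widetilde{M}}$, and exploit non-elementarity of $\Gamma$ (together with its action on the boundary) to compose with a further element that replaces $\xi'$ by the desired $\xi$.

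Items (4) and (5) are the quantitative statements and will be the main obstacle. For (4), one starts from the configuration $c_1(0),c_2(0),c_1(t),c_2(t)$; the positive asymptotic hypothesis only provides a uniform bound $d(c_1(t),c_2(t))\leq C$, and extracting the explicit constant $2R_{\frac{\pi}{2}}+3d(c_1(0),c_2(0))$ requires applying the visibility angle condition (at the vertex of $c_1$ nearest the perpendicular foot of $c_2(t)$) and then invoking the triangle inequality to bound how far $c_2(t)$ can drift transversely to $c_1$. For (5), suppose a point $q\in B(c_{\bm{v}}(t),R)$ satisfies $\measuredangle_p(q,\bm{v})\geq\epsilon$; the segment from $p$ to $q$ passes within distance $R$ of $c_{\bm{v}}(t)$, hence travels far from $p$, and the uniform visibility axiom forces $\measuredangle_p(q,\bm{v})<\epsilon$ once $t$ exceeds an explicit threshold $L(\epsilon,R)$ built from $R$ and the uniform visibility constant $R_\epsilon$, a contradiction. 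The core difficulty across (4) and (5) is assembling the triangle inequality with the visibility constant to produce effective quantitative bounds, a packaging carried out in \cite{LW} and \cite{LWW}.
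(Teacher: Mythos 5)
The paper itself does not prove this proposition: all five items are imported from \cite{Eb1}, \cite{LW} and \cite{LWW}, with only the remark that item (5) follows from uniform visibility and that $L$ can be taken independent of $\bm{v}$. So there is no in-paper argument to compare against, and your sketches stand on their own. Your outlines of (2) and (3) capture the standard visibility arguments (the positive angle $\measuredangle_p(\xi,\eta)$ forces the connecting segments $\sigma_n$ to stay within $R_{\epsilon_0}$ of $p$, which is what makes the compactness extraction work), and deferring the constant-chasing in (4) to \cite{LW} is reasonable for a cited result.

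However, two of your steps would fail as written. In (1), the claim that ``continuous dependence of geodesics on initial vectors places $c_{\bm{v}_n}(t_n)$ near $c_{\bm{v}}(t_n)$ for large $n$'' is false when $t_n\to\infty$: continuous dependence is uniform only on compact time intervals, and nearby geodesics may separate arbitrarily (exponentially, in negative curvature), so $d(c_{\bm{v}_n}(t_n),c_{\bm{v}}(t_n))$ need not be small. The cone topology does not require metric proximity; the correct route is to fix a large $T$, use continuous dependence only on $[0,T]$ to get $c_{\bm{v}_n}(T)$ close to $c_{\bm{v}}(T)$, and then use uniform visibility on the tail segment $c_{\bm{v}_n}([T,t_n])$ (which stays at distance at least $T-\delta$ from $p=\pi(\bm{v})$) to bound $\measuredangle_p(c_{\bm{v}_n}(t_n),c_{\bm{v}_n}(T))$, hence $\measuredangle_p(c_{\bm{v}_n}(t_n),\bm{v})$. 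In (5), you apply uniform visibility to the segment from $p$ to $q$; but that segment contains $p$, so its distance to $p$ is zero and visibility yields no angle bound at all. The segment to use is the one from $q$ to $c_{\bm{v}}(t)$: since $\measuredangle_p(q,\bm{v})=\measuredangle_p(q,c_{\bm{v}}(t))\geq\epsilon$ would force that segment within $R_\epsilon$ of $p$, while every point of it lies within $R$ of $c_{\bm{v}}(t)$ and hence at distance at least $t-R$ from $p$, one gets a contradiction as soon as $t>R+R_\epsilon$. This corrected argument also delivers the paper's remark that $L=R+R_\epsilon$ depends only on $\epsilon$ and $R$, not on $\bm{v}$.
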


Proposition~\ref{prop_2_7} (5) is proved in \cite{LWW} for rank 1 manifolds without focal points.
One can check that it follows from the uniform visibility condition,
and the constant $L$ here does not depend on the choice of $\bm{v}$.

A limit point $\xi\in\widetilde{M}(\infty)$ is called a $\mathbf{conical~limit~point}$,
if for any point $p\in\widetilde{M}$, there exists a constant $C=C(p)>0$ and $\{\alpha_n\}^{\infty}_{n=1}\subset\Gamma$,
such that $d(\alpha_n p, c_{p,\xi})\leq C$ and $\alpha_n p\to\xi$.
Denote the set of all conical limit points by $\mathbf{L_c(\Gamma)}$.

A limit point $\xi \in \widetilde{M}(\infty)$ is called a $\mathbf{Myrberg~limit~point}$
if for any pair of points $\eta, \eta' \in L(\Gamma)$ with $\eta \neq\eta'$, and for any point $x \in \widetilde{M}$,
there exists a sequence $\{\alpha_{n}\}^{\infty}_{n=1}\subset \Gamma$ such that
$$\alpha_{n}x\rightarrow \eta, ~~~\alpha_{n}\xi\rightarrow \eta'.$$
The set of all Myrberg limit points is denoted by $\mathbf{L_{m}(\Gamma)}$.

When the manifold is negatively curved, it is well known that $L_{m}(\Gamma) \subset L_c(\Gamma)$.
We extend this result to the visibility manifolds.

\begin{proposition}\label{myr}
Let $\widetilde{M}$ be a simply connected uniform visibility manifolds without conjugate points,
then $L_{m}(\Gamma) \subset L_c(\Gamma)$.
\end{proposition}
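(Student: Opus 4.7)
The plan is to take a Myrberg limit point $\xi$ and, using the defining property of $L_m(\Gamma)$, produce a sequence of group elements whose translates of the geodesic ray $c_{p,\xi}$ pass uniformly close to a fixed basepoint. Passing to inverses then gives an orbit approaching $\xi$ while staying in a bounded tube around $c_{p,\xi}$, which is exactly conicality.

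More precisely, fix $\xi\in L_m(\Gamma)$ and $p\in\widetilde{M}$. Since $\Gamma$ is non-elementary, choose two distinct points $\eta\neq\eta'\in L(\Gamma)$, and by Proposition~\ref{prop_2_7}(2) pick a bi-infinite geodesic $c$ with $c(-\infty)=\eta$ and $c(+\infty)=\eta'$; set $q=c(0)$. The Myrberg property applied to the pair $(\eta,\eta')$ and the point $p$ yields $\{\alpha_n\}\subset\Gamma$ with $\alpha_n p\to\eta$ and $\alpha_n \xi\to\eta'$ in the cone topology. Now consider the translated geodesic rays $\alpha_n c_{p,\xi}$: they are geodesics from $\alpha_n p$ to $\alpha_n \xi$, whose endpoints converge to the two distinct ideal points $\eta$ and $\eta'$.

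The main step is to show that these translates pass within a uniformly bounded distance of $q$. Since $\eta\neq\eta'$, continuity of $\Psi$ in Proposition~\ref{prop_2_7}(1) gives $\measuredangle_q(\eta,\eta')>0$. If we had $d(q,\alpha_n c_{p,\xi})\to\infty$ along a subsequence, the visibility axiom applied at $q$ with $\varepsilon=\tfrac12\measuredangle_q(\eta,\eta')$ would force $\measuredangle_q(\alpha_n p,\alpha_n \xi)\leq\varepsilon$ eventually, yet by continuity of the angle this quantity tends to $\measuredangle_q(\eta,\eta')=2\varepsilon$, a contradiction. Thus there is a constant $K>0$ and, for each large $n$, a point $y_n=\alpha_n c_{p,\xi}(t_n)$ with $d(q,y_n)\leq K$. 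Since $d(\alpha_n p,q)\to\infty$ and $d(q,y_n)\leq K$, we have $t_n=d(\alpha_n p,y_n)\to\infty$.

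To finish, set $\beta_n=\alpha_n^{-1}$ and note that $\Gamma$ acts by isometries, so
\[
d\bigl(\beta_n q,\, c_{p,\xi}(t_n)\bigr)=d(q,\alpha_n c_{p,\xi}(t_n))\leq K,
\qquad
d\bigl(\beta_n p, c_{p,\xi}\bigr)\leq d(p,q)+K.
\]
Because $t_n\to\infty$, Proposition~\ref{prop_2_7}(5) implies $\beta_n q\in C(c'_{p,\xi}(0),\epsilon)$ for any $\epsilon>0$ and $n$ large, hence $\beta_n q\to\xi$, and therefore also $\beta_n p\to\xi$ in the cone topology (the distance $d(\beta_n p,\beta_n q)=d(p,q)$ is constant, so the angular convergence is inherited). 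This exhibits $\xi$ as a conical limit point with constant $C=d(p,q)+K$. The only delicate point is obtaining the uniform bound $d(q,\alpha_n c_{p,\xi})\leq K$; beyond that, the argument is just a careful identification of the correct sequence via inversion in $\Gamma$.
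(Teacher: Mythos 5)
Your proof is correct and follows essentially the same route as the paper's: apply the Myrberg property to a pair of distinct limit points, use the positivity of the angle $\measuredangle_q(\eta,\eta')$ together with uniform visibility to force the translated rays $\alpha_n c_{p,\xi}$ to pass within a uniform distance of a fixed point, and then invert to exhibit orbit points in a bounded tube around $c_{p,\xi}$. In fact your write-up is slightly more complete than the paper's, since you explicitly verify via Proposition~\ref{prop_2_7}(5) that $\alpha_n^{-1}p\to\xi$ in the cone topology, a step the paper's proof leaves implicit.
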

\begin{proof}
Let $\xi$ be a Myrberg limit point, by definition, for any limit points $\eta \neq \eta' \in L(\Gamma)$,
and any $x \in \widetilde{M}$,
there exists a sequence $\{\alpha_{n}\}^{\infty}_{n=1}\subset \Gamma$ such that
$\alpha_{n}x\rightarrow \eta, ~\alpha_{n}\xi\rightarrow \eta'$. By Proposition~\ref{prop_2_7} (1),
we know that
$$\measuredangle_x(\alpha_{n}x,\alpha_{n}\xi)\rightarrow \measuredangle_x(\eta,\eta')>0.$$
Thus there exists an $\epsilon_{0}>0$ such that
\begin{equation}\label{eq2.1}
		 \measuredangle_x(\alpha_{n}x,\alpha_{n}\xi) \geqslant \epsilon_{0}>0,~~~n \in \mathbb{N}^{+}.
\end{equation}
Since $\widetilde{M}$ satisfies the axiom of uniform visibility, by inequality~\eqref{eq2.1} we know that there is a constant $R\triangleq R_{\epsilon_{0}}>0$ satisfies
$$d(x, c_{n})\leqslant R, ~~~n \in \mathbb{N}^{+},$$
where $c_{n}  \triangleq c_{\alpha_{n}x, \alpha_{n}\xi}=\alpha_{n}\circ c_{x,\xi}$
is the geodesic ray that from $\alpha_{n}x$ to $\alpha_{n}\xi$.
Therefore we can choose a point $p_{n}\in c_{n}$ with
\begin{equation}\label{eq2.2}
		d(x, p_{n})\leqslant R,  ~~~n \in \mathbb{N}^{+}.
\end{equation}
Let $\bm{v}_{n}$ be the tangent vector of geodesic $c_{n}$ at $p_{n}$, i.e.,
$\bm{v}_{n}\in T_{p_{n}}^{1}\widetilde{M}$ with $c_{\bm{v}_{n}}(+\infty)=\alpha_{n}\xi$,
thus we get a sequence of vectors $\{\bm{v}_{n}\}^{\infty}_{n=1}$.
By inequality~\eqref{eq2.2}, passing to a sub-sequence if needed,
we can assume that
\begin{equation}\label{eq2.3}
		\lim_{n\rightarrow +\infty}\bm{v}_{n} = \bm{v} \in T_{p}^{1}\widetilde{M}.
\end{equation}

By proposition~\ref{prop_2_7} (1), $p$ is at the connecting geodesic $c_{\eta,\eta'}$ and $\bm{v}$ is tangent to this geodesic at $p$.
Without loss of generality, by~\eqref{eq2.3}, we can assume that $$d(p_{n},p)<1, ~~~n\in \mathbb{N}^{+}.$$
Thus
$$
d(\alpha^{-1}_{n}p, c_{x,\xi})  =  d(p, \alpha_{n}\circ c_{x,\xi})
			                    =  d(p, c_{n})
			                    \leqslant  d(p,p_{n}) < 1.
$$
Therefore we have
\begin{displaymath}
		\begin{aligned}
			d(\alpha^{-1}_{n}x, c_{x,\xi}) & = d(x, \alpha_{n}\circ c_{x,\xi})                   \\
			                               & = d(x, c_{n}) \\
			                               & \leqslant d(x,p)+d(p,c_{n})                                               \\
			                               & < (R+1)+1                                    \\
                                           & = R+2, ~~~n\in \mathbb{N}^{+}.                                    \\
		\end{aligned}
\end{displaymath}
This shows that $\xi \in L_c(\Gamma)$.

\begin{figure}[hb]
		\centering
		\includegraphics[width=0.9\textwidth]{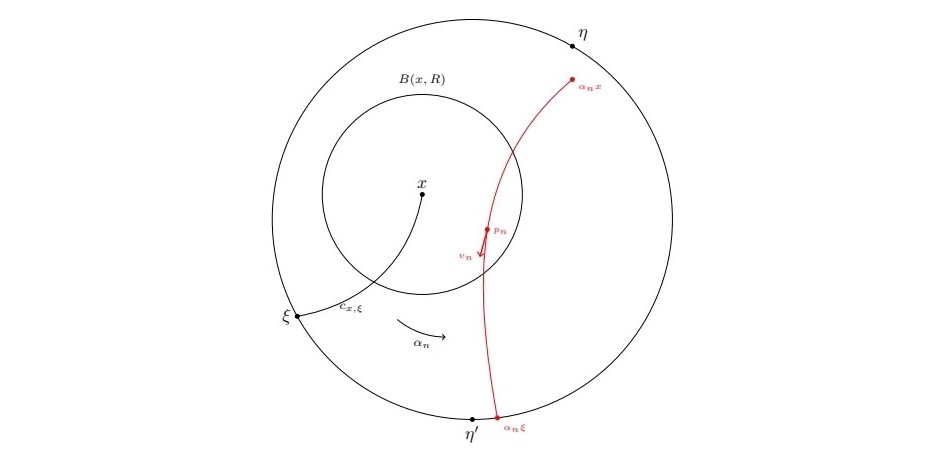}
	\end{figure}

\end{proof}

The $\mathbf{Busemann~function}$ is defined as follows:
	\begin{displaymath}
		\begin{aligned}
			\beta:~&~\widetilde{M}(\infty)\times\widetilde{M}\times\widetilde{M} & \to     &~\mathbb{R}                                               , \\
			        & \qquad(\xi,p,x)                                              & \mapsto &~\beta_{\xi}(p,x)\triangleq\lim_{t\to\infty}\{d(p,c_{x,\xi}(t))-t\}.
		\end{aligned}
	\end{displaymath}
One can check that $|\beta_{\xi}(p,x)|\leq d(p,x)$ by the triangle inequality.

The level sets of a Busemann function are called $\mathbf{horospheres}$. More precisely, the set
$$ H_{\xi}(p)\triangleq\{x\in\widetilde{M}\mid\beta_{\xi}(p,x)=0\}$$
is called the horosphere with center $\xi\in\widetilde{M}(\infty)$, based at $p\in\widetilde{M}$.

Let $\mathbf{\widetilde{M}^2(\infty)}\triangleq\widetilde{M}(\infty)\times\widetilde{M}(\infty)-\{(\xi,\xi)\mid \xi\in\widetilde{M}(\infty)\}$.
Fix a point $p\in\widetilde{M}$, the $\mathbf{Gromov~product~at~p}$ is given by
\begin{displaymath}
	\begin{aligned}
		\beta_p:~& \widetilde{M}^2(\infty) & \to     &~\mathbb{R}^{+},                     \\
		          & (\xi,\eta)              & \mapsto &~\beta_p(\xi,\eta)\triangleq\beta_{\xi}(p,x)+\beta_{\eta}(p,x). \\
	\end{aligned}
\end{displaymath}
Here $x\in\widetilde{M}$ is any point on the connecting geodesic $c_{\xi,\eta}$.

One can see that the Gromov product doesn't depend on the choice of the connecting geodesic $c_{\xi,\eta}$, nor on the choice of $x$. Geometrically, $\beta_p(\xi,\eta)$ is the length of $c_{\xi,\eta}$ bounded between the horospheres $H_{\xi}(p)$ and $H_{\eta}(p)$.

\begin{Defi}\label{def2}
Let $r$ be a positive real number, a family of finite Borel measure ${\{\mu_p\}}_{p\in\widetilde{M}}$ on the ideal boundary
is called an $\mathbf{r}$-$\mathbf{dimensional~Busemann~density}$, if
	\begin{enumerate}
		\item For each $p\in\widetilde{M}$, the support of $\mu_p$ is contained in $L(\Gamma)$.
		\item For any $p, q\in\widetilde{M}$ and $\mu_p$-a.e. $\xi\in\widetilde{M}(\infty)$ on the ideal boundary, we have
              \begin{equation}\label{eq3.1}
		           \frac{\mathrm{d}\mu_p}{\mathrm{d}\mu_q}(\xi)=\mathrm{e}^{-r\beta_{\xi}(p,q)},
              \end{equation}
		      where $\beta_{\xi}(p,q)$ is the Busemann functions defined above.
		\item ${\{\mu_p\}}_{p\in\widetilde{M}}$ is $\Gamma$-equivariant, i.e., for any Borel subset $A\subset\widetilde{M}(\infty)$ and $\alpha\in\Gamma$,
		       \begin{equation}\label{eq3.2}
		           \mu_{\alpha p}(\alpha A)=\mu_p(A).
              \end{equation}
	\end{enumerate}
\end{Defi}

As is well known that (see Proposition~\ref{prop_2_7}(2)), visibility insures that for any two points $\xi\neq\eta$ on the ideal boundary,
there exists at least one connecting geodesic, but this doesn't mean that the connecting geodesic in unique!
In fact, when the manifold is non-positively curved or has no focal points,
the non-uniqueness of the connecting geodesics implies the existence of a flat strip.
On the other hand, Keith Burns construct a clever example showed that the flat strip theorem is not valid for manifolds without conjugate points (cf. \cite{Bu}).

The failure of the flat strip theorem caused trouble in constructing the Bowen-Margulis-Sullivan measure, which is the theme of this notes. We need the following concept, first proposed by Eberlein and O'Neill for manifolds of non-positive curvature.

\begin{Defi} [cf. \cite{EO}] \label{def1}
Let $\widetilde{M}$ be a complete simply connected Riemannian manifold without conjugate points,
we call it satisfies $\mathbf{Axiom~2}$, if for any points $\xi,\eta \in \widetilde{M}(\infty)$ with $\xi\neq\eta$, there exists at most one geodesic connecting them.
\end{Defi}

Let ${\{\mu_q\}}_{q\in\widetilde{M}}$ be an $r$-dimensional Busemann density.
Fix a point $p\in\widetilde{M}$, we can define a measure on $\widetilde{M}^2(\infty)$ by
\begin{displaymath}
	\mathrm{d}\widetilde{\mu}(\xi,\eta)=\mathrm{e}^{r\cdot\beta_p(\xi,\eta)}\mathrm{d}\mu_p(\xi)\mathrm{d}\mu_p(\eta),
\end{displaymath}

It's easy to check that this definition does not depend on the choice of the point $p$, and it is $\Gamma$-invariant by the equivariance of the Patterson-Sullivan measure.
Furthermore, if $\widetilde{M}$ is a uniform visibility manifold without conjugate points that satisfies Axiom $2$, this measure induces a $\phi_t$- and $\Gamma$-invariant measure $\widetilde{\mathfrak{m}}$ on $T^1\widetilde{M}$ by
\begin{displaymath}
    \widetilde{\mathfrak{m}}(A)=\int_{\widetilde{M}^2(\infty)}\text{Length}(c_{\xi,\eta}\cap \pi(A))\,\mathrm{d}\widetilde{\mu}(\xi,\eta)
\end{displaymath}
for any Borel set $A\subset T^1\widetilde{M}$. Here $c_{\xi,\eta}$ is the unique connecting geodesic from $\xi$ to $\eta$
since $\widetilde{M}$ satisfies the Axiom of uniform visibility and Axiom $2$,
and $\pi:T^1 \widetilde{M}\to\widetilde{M}$ is the standard projection.

By the $\Gamma$-invariance of $\widetilde{\mathfrak{m}}$ on $T^1\widetilde{M}$,
the measure induces a $\phi_t$-invariant measure $\mathfrak{m}$ on $T^1M$ by the standard projection map,
which is known as the $\mathbf{r}$-$\mathbf{dimensional}$~$\mathbf{Bowen}$-$\mathbf{Margulis}$-$\mathbf{Sullivan~(BMS)~measure}$.
The BMS measure was first constructed on hyperbolic manifolds by Dennis Sullivan in~\cite{Su1},
then he proved that it is just the unique measure of maximal entropy for compact hyperbolic manifolds (cf. \cite{Su2}).

$\bm{v}\in T^1M$ is called a $\mathbf{conservative~point}$ of the geodesic flow, if there exists a compact subset $A \subset T^1M$
and a sequence of real numbers $\{t_{n}\}^{\infty}_{n=1}$, such that $t_n\to+\infty$ and $\phi_{t_n}(\bm{v})\in A$ for all $n$.
$\bm{v}\in T^1M$ is called a $\mathbf{dissipative~point}$, if for any compact subset $A\subset T^1M$, there is a real number $t_A > 0$, such that $\phi_t(\bm{v})\notin A$ for all $t>t_A$.

Let $M_C$ and $M_D$ be the sets of all conservative points and the set of all dissipative points, respectively.
For each $\bm{v}\in T^1M$, it is either a conservative points or a dissipative point, thus we have $T^1M=M_C\sqcup M_D$.
The geodesic flow is called $\mathbf{conservative}$ with respective to $\mathfrak{m}$ if $\mathfrak{m}(M_D)=0$.
Similarly, it is called $\mathbf{completely~dissipative}$ with respective to $\mathfrak{m}$ if $\mathfrak{m}(M_C)=0$.

An isometry $\alpha \in \mathrm{Iso}(\widetilde{M})$ is called $\mathbf{axial}$
if there is a constant $T>0$ and a geodesic $c$ such that
$$ \alpha \circ c(t)=c(t+T),~~~t\in \mathbb{R}.$$
The geodesic $c$ is called an $\mathbf{axis}$ of the axial element $\alpha$.

\begin{proposition}\label{axial}
Let $\widetilde{M}$ be a simply connected uniform visibility manifolds without conjugate points,
and $\alpha \in Iso(\widetilde{M})$ be an axial element, then
\begin{enumerate}
   \item $\alpha$ generates a discrete group.
   \item If geodesic $c$ is an axis of $\alpha$, then for any $p \in \widetilde{M}$, we have
   $$\alpha^{-n}(p)\rightarrow c(-\infty),~~\alpha^{n}(p)\rightarrow c(+\infty),~~~~n\rightarrow +\infty.$$
   \item All axes of $\alpha$ are equivalent. i.e., they have the same endpoints in the ideal boundary.
   \item If there exists $\beta \in Iso(\widetilde{M})$ and $n\in \mathbb{N}^{+}$ such that
   $\beta \circ \alpha^{n} = \alpha^{n} \circ \beta$, then $\beta$ fix the two endpoints of the axes of $\alpha$.
	\end{enumerate}
\end{proposition}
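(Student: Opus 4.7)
The plan is to establish the four items in order, with part (2) doing the essential geometric work and parts (3) and (4) following formally from it. The key geometric ingredients are the translation structure along an axis $c$ (which gives $\alpha^n c(0) = c(nT)$ and $d(\alpha^n q, c(nT)) = d(q, c(0))$ for every $q \in \widetilde{M}$) and Proposition~\ref{prop_2_7}~(5), which places a ball of fixed radius around a far-away point of $c$ inside an arbitrarily narrow truncated cone based at $c(0)$.

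For part (1), I would observe that if some subsequence $\{\alpha^{n_k}\}$ converged to an isometry of $\widetilde{M}$, then the displacements $d(c(0), \alpha^{n_k}c(0)) = |n_k|T$ would have to remain bounded, forcing the $n_k$ to repeat. This contradicts distinctness of group elements, so $\langle\alpha\rangle$ is discrete. For part (2), I fix $p \in \widetilde{M}$, set $R := d(p, c(0))$, and note $\alpha^n p \in B(c(nT), R)$. Given $\epsilon>0$, Proposition~\ref{prop_2_7}~(5) supplies $L = L(\epsilon, R)$ so that for $nT > L$ this ball lies in the cone $C(c'(0), \epsilon)$ with apex at $c(0)$; meanwhile $d(c(0), \alpha^n p) \geq nT - R \to \infty$. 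By the definition of the cone topology this means $\alpha^n p \to c(+\infty)$. The symmetric argument with the time-reversed geodesic and $\alpha^{-1}$ yields $\alpha^{-n}p \to c(-\infty)$.

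For part (3), let $c_1, c_2$ be two axes of $\alpha$ with translation lengths $T_1, T_2$. From $d(c_i(0), \alpha^n c_i(0)) = nT_i$ together with $d(\alpha^n c_1(0), \alpha^n c_2(0)) = d(c_1(0), c_2(0))$, the triangle inequality gives $n|T_1 - T_2| \leq 2 d(c_1(0), c_2(0))$ for every $n$, forcing $T_1 = T_2 =: T$. Applying part~(2) to $c_2$ with test point $p = c_1(0)$ gives $\alpha^n c_1(0) \to c_2(+\infty)$, while $\alpha^n c_1(0) = c_1(nT) \to c_1(+\infty)$ directly. Uniqueness of limits in the cone topology forces $c_1(+\infty) = c_2(+\infty)$, and the backward endpoints agree by the same reasoning with $\alpha^{-1}$. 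For part (4), assume $\beta \alpha^n = \alpha^n \beta$ with $n \in \mathbb{N}^+$, and let $c$ be an axis of $\alpha$ with translation $T$; then $c$ is also an axis of $\alpha^n$ with translation $nT$. The curve $\widetilde{c} := \beta \circ c$ is a geodesic, and $\alpha^n \widetilde{c}(t) = \beta \alpha^n c(t) = \beta c(t+nT) = \widetilde{c}(t+nT)$, so it is another axis of $\alpha^n$. Applying part~(3) to $\alpha^n$ gives $\beta c(\pm\infty) = \widetilde{c}(\pm\infty) = c(\pm\infty)$, as required.

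The main conceptual obstacle is that the flat strip theorem fails in the no-conjugate-points setting, so one cannot identify axes through convex geometry as in the CAT(0) case; the uniform visibility hypothesis, channelled through Proposition~\ref{prop_2_7}~(5), is what allows the cone-topology convergence underlying (2) to be controlled independently of the base point $p$, and everything downstream leans on that single convergence statement.
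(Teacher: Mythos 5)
Your proof is correct, and for parts (1) and (2) it is essentially the paper's argument: discreteness from the linearly growing displacement $|n|T-2d(p,c(0))$, and convergence $\alpha^{n}p\to c(+\infty)$ from the fact that $\alpha^{n}p$ stays within bounded distance of $c(nT)$ while receding from $c(0)$ --- you invoke the packaged Proposition~\ref{prop_2_7}(5), whereas the paper applies the uniform visibility axiom directly to the segment from $\alpha^{n}p$ to $\alpha^{n}c(0)$, but these are the same mechanism. (Only cosmetic point: $d(\alpha^{n}p,c(nT))=d(p,c(0))$ exactly, so take $R=d(p,c(0))+1$ to land in the \emph{open} ball of Proposition~\ref{prop_2_7}(5).) Where you genuinely diverge is in (3) and especially (4). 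For (3) the paper derives $T_1=T_2$ as you do but then shows $d(c_1(t),c_2(t))\leq d(c_1(0),c_2(0))+2T_1$ for all $t$, concluding equivalence from bounded distance; you instead get the endpoint identity by feeding $p=c_1(0)$ into part (2) for the axis $c_2$ and using uniqueness of limits --- equally valid, and it makes (3) a pure corollary of (2). For (4) the paper cites Eberlein's classification of the boundary fixed points of $\alpha^{n}$ as $\{c(\pm\infty)\}$, obtains a two-case dichotomy (fix or swap), and then rules out the swap by a separate limit computation; your observation that $\beta\circ c$ is itself an axis of $\alpha^{n}$ (with the same positive translation $nT$, hence the same \emph{oriented} endpoints by part (3)) delivers the conclusion in one line, with no external citation and no case analysis. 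That is a cleaner and more self-contained route, and it is sound because part (3), as you and the paper both prove it, identifies endpoints respecting the translation direction.
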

\begin{proof}
(1). Let $c$ be an axis of the axial element $\alpha$, i.e., there exists a positive $T>0$,
such that $\alpha\circ c(t)=c(t+T), ~t\in \mathbb{R}$. Thus for any $p\in \widetilde{M}$ and $n \in \mathbb{Z}$,
\begin{displaymath}
		\begin{aligned}
			d(p, \alpha^{n}(p)) & \geqslant d(c(0), \alpha^{n}(c(0))) - d(p,c(0))-d(\alpha^{n}(p),\alpha^{n}c(0))     \\
			                               & = d(c(0), c(nT)) - 2d(p,c(0)) \\
			                               & = |n| T -2d(p,c(0)),                                               \\
		\end{aligned}
\end{displaymath}
thus $\alpha$ generates a discrete group.

(2). Let $c$ be an axis of $\alpha$ as in (1), i.e., $\alpha\circ c(t)=c(t+T), ~t\in \mathbb{R}$.
Denote by $q\triangleq c(0)$ and $\theta_{n}\triangleq \measuredangle_{q}(c(+\infty),\alpha^{n}(p)), ~n\in \mathbb{N}^{+}$.
Since $d(\alpha^{n}(q), \alpha^{n}(p))=d(q,p)$ is independent of $n$ and $d(q,c(nT))=nT\rightarrow \infty$,
we know that
$$d(q,c_{n})\rightarrow +\infty, ~~~n\rightarrow +\infty,$$
where $c_{n}=c_{\alpha^{n}(p),\alpha^{n}(q)}$ is the connecting geodesic segment from $\alpha^{n}(p)$ to $\alpha^{n}(q)$.
Thus by the Axiom of uniform visibility, we have $\theta_{n}\rightarrow 0$.
Therefore by the definition of the cone topology, we have $\alpha^{n}(p)\rightarrow c(+\infty),~n\rightarrow +\infty.$
Similar argument will show that $\alpha^{-n}(p)\rightarrow c(-\infty),~n\rightarrow +\infty.$

(3). Let $c_{1}$ and $c_{2}$ be two different axes of the axial element $\alpha$ with
$$\alpha \circ c_{1}(t)=c_{1}(t+T_{1}), ~~~t\in \mathbb{R},$$
$$\alpha \circ c_{2}(t)=c_{2}(t+T_{2}), ~~~t\in \mathbb{R},$$
where $T_{1}>0$ and $T_{2}>0$ are real constants.
First we'll show that $T_{1}=T_{2}$.
By definition, for any $n\in \mathbb{N}^{+}$, we have
\begin{displaymath}
		\begin{aligned}
			nT_{2} & = d(c_{2}(0), \alpha^{n}\circ c_{2}(0)))     \\
			       & \geqslant d(c_{1}(0), \alpha^{n}\circ c_{1}(0))) - d(c_{1}(0),c_{2}(0))-d(\alpha^{n}\circ c_{1}(0),\alpha^{n}\circ c_{2}(0)) \\
			       & = nT_{1} -2d(c_{1}(0),c_{2}(0)).                                              \\
		\end{aligned}
\end{displaymath}
Thus we have
\begin{equation}\label{eq2.4}
		T_{2} \geqslant T_{1} - \frac{2}{n}d(c_{1}(0),c_{2}(0)), ~~~n\in \mathbb{N}^{+}.
\end{equation}
Similarly
\begin{equation}\label{eq2.5}
		T_{1} \geqslant T_{2} - \frac{2}{n}d(c_{1}(0),c_{2}(0)), ~~~n\in \mathbb{N}^{+}.
\end{equation}
By~\eqref{eq2.4} and~\eqref{eq2.5}, we have $T_{1}=T_{2}$. Moreover, we have
$$d(c_{1}(t),c_{2}(t))\leqslant d(c_{1}(0),c_{2}(0)) + 2T_{1},~~~t\in \mathbb{R}.$$
Thus $c_{1}$ and $c_{2}$ are equivalent.

(4). By (3) we know if the axial element $\alpha$ have more than one axis, all of them are equivalent geodesics,
thus have the same endpoints in the ideal boundary $\widetilde{M}(\infty)$.
Let $\xi$ and $\eta$ be the endpoints of these axes, choose one axis $c$ with $c(-\infty)=\xi$ and $c(+\infty)=\eta$.
Since $\beta \circ \alpha^{n} = \alpha^{n} \circ \beta$, we have
$\beta \circ \alpha^{n} (c(\pm\infty))= \alpha^{n} \circ \beta(c(\pm\infty))$,
thus we get $\beta(c(\pm\infty)) = \alpha^{n} \circ \beta(c(\pm\infty))$, so $\beta(c(\pm\infty))$ are the fixed points in the ideal boundary of $\alpha^{n}$.
While the set of fixed points of $\alpha^{n}$ are just $\{c(+\infty), c(-\infty)\}$ (cf. \cite{Eb1} Proposition 2.6(2)),
therefore there are only two cases: either
\begin{equation}\label{eq2.6}
		\beta(c(+\infty))=c(+\infty),~~~\beta(c(-\infty))=c(-\infty);
\end{equation}
or
\begin{equation}\label{eq2.7}
		\beta(c(+\infty))=c(-\infty),~~~\beta(c(-\infty))=c(+\infty).
\end{equation}

Now we'll show that~\eqref{eq2.7} is not true, thus~\eqref{eq2.6} must be true, thus the conclusion is valid.
In fact, $\beta \circ \alpha^{n}(c(0)) = \alpha^{n} \circ \beta(c(0))$, i.e.,
 $\beta (c(nT)) = \alpha^{n} \circ \beta(c(0))$, by (3) of this proposition we know that $\beta(c(+\infty))=c(+\infty)$,
 thus~\eqref{eq2.6} is valid.
\end{proof}

\begin{proposition}\label{axial2}
Let $\widetilde{M}$ be a simply connected uniform visibility manifolds without conjugate points,
$\Gamma$ be a discrete subgroup of the isometry group $Iso(\widetilde{M})$, and $\alpha,~\beta$ be two elements in $\Gamma$.
Suppose that $\alpha$ is an axial element, and geodesic $c$ is an axis of $\alpha$, if $c(+\infty)$ is a fixed point of $\beta$,
then there exists a non-zero integer $n$ such that $\beta \circ \alpha^{n} = \alpha^{n} \circ \beta$,
and $c(-\infty)$ is another fixed point of $\beta$.
\end{proposition}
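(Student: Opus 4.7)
I plan to prove the proposition by first establishing the commutation $\beta\alpha^{n}=\alpha^{n}\beta$ for some nonzero integer $n$, after which Proposition~\ref{axial}(4) immediately delivers that $\beta$ also fixes $c(-\infty)$. So the heart of the argument is to locate a nonzero power of $\alpha$ commuting with $\beta$, and the natural strategy is a counting argument on the conjugates of $\beta$ by powers of $\alpha$ acting on a carefully chosen basepoint.

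Set $\xi:=c(+\infty)$, let $T>0$ be the translation length of $\alpha$ along $c$, and choose the basepoint $p:=c(0)$. For each $n\in\mathbb{N}^{+}$ consider the conjugate $g_{n}:=\alpha^{-n}\beta\alpha^{n}\in\Gamma$. Because $\alpha^{n}$ is an isometry with $\alpha^{n}(p)=c(nT)$,
\begin{displaymath}
d(p,g_{n}(p))=d(\alpha^{n}p,\beta\alpha^{n}p)=d(c(nT),\beta(c(nT))).
\end{displaymath}
Since $\beta$ is an isometry fixing $\xi$, the map $t\mapsto\beta(c(t))$ is a unit-speed geodesic with forward endpoint $\beta(\xi)=\xi$, so $c$ and $\beta\circ c$ lie in the same equivalence class on the ideal boundary and are therefore positively asymptotic. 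Proposition~\ref{prop_2_7}(4) then supplies
\begin{displaymath}
d(c(t),\beta(c(t)))\leq 2R_{\pi/2}+3d(p,\beta p)=:D\qquad\text{for }t>0,
\end{displaymath}
so the entire sequence $\{g_{n}(p)\}_{n\geq 1}$ is confined to the closed ball $\overline{B(p,D)}$.

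To conclude, I invoke the discreteness of $\Gamma$: since $\Gamma=\pi_{1}(M)$ acts freely and properly discontinuously on $\widetilde{M}$ and $\overline{B(p,D)}$ is compact by the Hopf--Rinow theorem, the intersection $\Gamma\cdot p\cap\overline{B(p,D)}$ is finite. Thus the infinite sequence $\{g_{n}(p)\}_{n\geq 1}$ takes only finitely many values, and the pigeonhole principle yields integers $n>m\geq 1$ with $g_{n}(p)=g_{m}(p)$. Freeness of the action upgrades this pointwise coincidence to the equality of group elements $g_{n}=g_{m}$, which reads $\beta\alpha^{n-m}=\alpha^{n-m}\beta$; setting $n_{0}:=n-m\neq 0$ gives the required commutation, and Proposition~\ref{axial}(4) applied with $n=n_{0}$ then tells us that $\beta$ fixes both endpoints of the axes of $\alpha$, in particular $\beta(c(-\infty))=c(-\infty)$. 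The main obstacle I anticipate is the positive-asymptoticity justification used in the key estimate: although it is a direct consequence of the definition of $\widetilde{M}(\infty)$ in this setting, one must verify carefully that Proposition~\ref{prop_2_7}(4) applies to $c$ and $\beta\circ c$ with their natural unit-speed parameterizations, without needing a parameter shift.
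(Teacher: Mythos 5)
Your proof is correct and follows essentially the same route as the paper's: bound $d(c(nT),\beta(c(nT)))$ by the positive asymptoticity of $c$ and $\beta\circ c$, use discreteness of $\Gamma$ to force two of the conjugates $\alpha^{-n}\circ\beta\circ\alpha^{n}$ to coincide, and then invoke Proposition~\ref{axial}(4) for the fixed point $c(-\infty)$. One small remark: freeness of the action is neither assumed (here $\Gamma$ is only a discrete subgroup of $Iso(\widetilde{M})$, not necessarily $\pi_1(M)$) nor needed, since discreteness already makes $\{g\in\Gamma \mid d(p,gp)\leq D\}$ finite as a set of group elements, so the pigeonhole argument applies directly to the elements $g_n$ rather than to the orbit points $g_n(p)$.
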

\begin{proof}
Since $\beta(c(+\infty))=c(+\infty)$, we know that the geodesics $c$ and $\beta(c)$ are positively asymptotic,
thus there exists a constant $C>0$ such that
$$d(c(t),(\beta\circ c)(t))\leqslant C,~~~t\geqslant 0. $$
Suppose that there is a $T>0$ with $$\alpha(c(t))=c(t+T), ~~~t\in \mathbb{R},$$
then we have
\begin{displaymath}
		\begin{aligned}
			d(c(0),~\alpha^{-n}\circ \beta \circ \alpha^{n}(c(0)))
        & = d(\alpha^{n} (c(0)),~\beta \circ \alpha^{n}(c(0)))     \\
	    & = d(c(nT), ~\beta\circ c(nT))                             \\
	    & \leqslant C, ~~~~~~n\in \mathbb{N}^{+}.                    \\
		\end{aligned}
\end{displaymath}
By the fact that $\Gamma$ is a discrete group, we know that there exists a constant $N>0$,
such that for any $n,m \geqslant N$, we have
$$\alpha^{-n}\circ \beta \circ \alpha^{n} = \alpha^{-m}\circ \beta \circ \alpha^{m},$$
therefore $\alpha^{m-n}\circ \beta = \beta \circ \alpha^{m-n}$. Furthermore,
by Proposition~\ref{axial}(4) $\beta$ also fixes $c(-\infty)$.
\end{proof}

\begin{theorem}\label{mini}
Let $\widetilde{M}$ be a simply connected uniform visibility manifolds without conjugate points,
and $\Gamma \subset Iso(\widetilde{M})$ be a discrete group, then
the $\Gamma$-action on the limit set is minimal, i.e., $\forall \xi \in L(\Gamma)$, $\overline{\Gamma\xi}=L(\Gamma)$.
\end{theorem}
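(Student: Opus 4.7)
The plan is to fix $\xi \in L(\Gamma)$ and prove that every $\eta \in L(\Gamma)$ lies in $\overline{\Gamma\xi}$. I will work under the implicit assumption that $\Gamma$ is non-elementary, since a cyclic group generated by a single axial isometry already shows that the statement fails in the elementary case with $\#L(\Gamma)=2$. With this assumption $L(\Gamma)$ is infinite, so I can pick an auxiliary limit point $\zeta \in L(\Gamma)$ distinct from both $\xi$ and $\eta$. By the $\Gamma$-duality conclusion of Proposition~\ref{prop_2_7}(3), the points $\zeta$ and $\eta$ are $\Gamma$-dual, so for a chosen basepoint $p \in \widetilde{M}$ there is a sequence $\{\alpha_n\} \subset \Gamma$ with $\alpha_n^{-1}(p) \to \zeta$ and $\alpha_n(p) \to \eta$ in the cone topology. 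The theorem will follow once I establish $\alpha_n(\xi) \to \eta$.

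To prove this convergence I would control the angle at $p$ between $\alpha_n(p)$ and $\alpha_n(\xi)$. Pushing through the isometry $\alpha_n^{-1}$,
\[
\measuredangle_p(\alpha_n(p),\alpha_n(\xi)) = \measuredangle_{\alpha_n^{-1}(p)}(p,\xi),
\]
so it suffices to show the right-hand side tends to $0$. By the uniform visibility axiom this reduces to verifying that $d(\alpha_n^{-1}(p), c_{p,\xi}) \to \infty$. If this failed along some subsequence, there would exist $R>0$ and points $q_{n_k} \in c_{p,\xi}$ with $d(\alpha_{n_k}^{-1}(p), q_{n_k}) \le R$; since $d(p,\alpha_{n_k}^{-1}(p)) \to \infty$ we would get $d(p,q_{n_k}) \to \infty$, and Proposition~\ref{prop_2_7}(5) applied to the initial unit tangent $\bm{v}$ of $c_{p,\xi}$ would place $\alpha_{n_k}^{-1}(p)$ inside the cone $C(\bm{v},\epsilon)$ for any prescribed $\epsilon>0$ and all large $k$. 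Passing to the cone-topology limit would then give $\measuredangle_p(\zeta,\xi) \le \epsilon$ for every $\epsilon>0$, forcing $\zeta=\xi$ and contradicting the choice of $\zeta$. Having obtained $\measuredangle_{\alpha_n^{-1}(p)}(p,\xi) \to 0$ and hence $\measuredangle_p(\alpha_n(p),\alpha_n(\xi)) \to 0$, I combine with $\measuredangle_p(\alpha_n(p),\eta) \to 0$ (which comes from $\alpha_n(p) \to \eta$ in the cone topology) via the triangle inequality for angles at $p$ to conclude $\measuredangle_p(\alpha_n(\xi),\eta) \to 0$, which is precisely $\alpha_n(\xi) \to \eta$ in the cone topology.

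The principal obstacle I anticipate is the distance-to-geodesic estimate $d(\alpha_n^{-1}(p), c_{p,\xi}) \to \infty$. In negatively curved manifolds the strict convexity of the distance function would yield this essentially for free, but without conjugate points one has no such convexity; the argument must instead carefully combine cone-topology convergence of $\alpha_n^{-1}(p)$ to $\zeta$ with the quantitative ``ball-inside-cone'' Proposition~\ref{prop_2_7}(5) that is specific to the uniform visibility setting. Once this estimate is secured, the remainder of the proof is a routine application of the uniform visibility axiom to convert large distance-from-geodesic into small angle-at-basepoint, together with standard manipulations of angles in the cone topology.
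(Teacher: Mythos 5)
Your proof is correct, and it follows a genuinely different route from the paper's. The paper applies Proposition~\ref{prop_2_7}(3) to the pair $(\xi,\eta)$ itself and then invokes Eberlein's Proposition 2.5 to obtain nested neighbourhoods $U_n\downarrow\{\xi\}$, $V_n\downarrow\{\eta\}$ together with $\alpha_n\in\Gamma$ and an auxiliary point $\zeta$ satisfying $\alpha_n^{-1}\zeta\in U_n$ and $\alpha_n\zeta\in V_n$, and then concludes in one line that $\lim_n\alpha_n^2\xi=\lim_n\alpha_n\zeta=\eta$; that last step is quite terse and really rests on the stronger contraction property packaged in Eberlein's proposition (one must know that $\alpha_n$ pushes everything outside $U_n$ into $V_n$, not merely the single point $\alpha_n^{-1}\zeta$), which the paper does not spell out. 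You instead pick a third limit point $\zeta\neq\xi$, use only the bare duality of $\zeta$ and $\eta$, and prove $\alpha_n\xi\to\eta$ directly via the isometry identity $\measuredangle_p(\alpha_np,\alpha_n\xi)=\measuredangle_{\alpha_n^{-1}p}(p,\xi)$, the ball-in-cone estimate of Proposition~\ref{prop_2_7}(5) (which forces $d(\alpha_n^{-1}p,c_{p,\xi})\to\infty$, since otherwise $\zeta=\xi$), and the uniform visibility axiom to convert distance into angle. Your argument is more self-contained and quantitative, applies $\alpha_n$ only once, and avoids the equicontinuity issue lurking in the comparison of $\alpha_n^2\xi$ with $\alpha_n^2(\alpha_n^{-1}\zeta)$; the price is the extra contradiction argument needed to replace convexity of the distance function, which you correctly identify as the crux in the no-conjugate-points setting. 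Your observation that the statement requires $\Gamma$ to be non-elementary is also well taken: a cyclic group generated by an axial isometry has a two-point limit set fixed pointwise, so $\overline{\Gamma\xi}\neq L(\Gamma)$ there, and the paper's own proof already uses non-elementarity implicitly through Proposition~\ref{prop_2_7}(3).
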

\begin{proof}
For any $\xi \in L(\Gamma)$, according to Proposition~\ref{prop_2_7}(3), for each $\eta\in L(\Gamma)$, $\xi$ and $\eta$ are $\Gamma$-dual.
Thus by Proposition 2.5 in \cite{Eb1},
there are two sequences of open neighbourhoods $\{U_{n}\}^{+\infty}_{n=1}$ and $\{V_{n}\}^{+\infty}_{n=1}$ in $\widetilde{M}(\infty)$,
$\{\alpha_{n}\}^{+\infty}_{n=1}\subset \Gamma$, and $\zeta \in \widetilde{M}(\infty)$,
such that
$$U_{n+1} \subset U_{n},~~~V_{n+1} \subset V_{n},~~n\in \mathbb{N};    $$
$$\bigcap^{+\infty}_{n=1}U_{n}=\{\xi\},~~~\bigcap^{+\infty}_{n=1}V_{n}=\{\eta\};    $$
$$\alpha^{-1}_{n}\zeta\in U_{n},~~~\alpha_{n}\zeta\in V_{n}.$$
Therefore $$\alpha^{-1}_{n}\zeta\rightarrow \xi,~~~\alpha_{n}\zeta\rightarrow \eta.$$
Finally we get $$\lim_{n\rightarrow+\infty}\alpha^{2}_{n}\xi = \lim_{n\rightarrow+\infty} \alpha_{n}\zeta = \eta.$$
\end{proof}

%The dynamics of geodesic flows on compact Riemannian manifolds has its special importance in the modern theory of dynamical systems. Geodesic flow is one of the central research topics in the modern theory of dynamical systems, and has been extensively studied in many aspects. We suggest the book of Paternain \cite{Pa}, Surveys of Eberlein \cite{Eb} and Knieper \cite{Kn3}, as excellent references on this topic. In this paper, we will consider the measures of maximal entropy for geodesic flows on \emph{rank $1$ manifolds without focal points}. So first we will give some definitions.

\section{\bf Bowen-Margulis-Sullivan Measures on (not Necessarily Compact) Visibility Manifolds}\label{BMS}
\setcounter{equation}{0}\setcounter{theorem}{0}
In this section, we'll prove the main results of this paper.

There is a classical method, due to Patterson (cf. \cite{Pa}), to construct Busemann density.
The main tool of Patterson's method is Poincar\'e series.

Given a real number $s$ and a pair of points $p,q$ in $\widetilde{M}$, the $\mathbf{Poincar\acute{e}~series}$ is defined as
	\begin{displaymath}
		P(s,p,q) \triangleq \sum_{\alpha\in\Gamma}\mathrm{e}^{-s d(p,\alpha q)}.
	\end{displaymath}
Then we define the $\mathbf{critical~exponent}$ of this Poincar$\acute{e}$ series as
	\begin{displaymath}
		\delta_\Gamma \triangleq\inf\{s\geq 0\mid P(s,p,q)<\infty\}.
	\end{displaymath}
It's easy to see that the critical~exponent $\delta_\Gamma$ is independent of the choices of the points $p$ and $q$.

\begin{theorem}\label{PS}
Let $\widetilde{M}$ be a simply connected uniform visibility manifolds without conjugate points,
and $\Gamma \subset Iso(\widetilde{M})$ be a discrete group, if the critical exponent $\delta_{\Gamma}<+\infty$,
then there exists at least one $\delta_{\Gamma}-$dimensional Busemann density with supprot exactly equal to the limit set $L(\Gamma)$.
\end{theorem}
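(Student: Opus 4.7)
The plan is to carry out the classical Patterson--Sullivan construction, taking weak-$*$ limits of normalized orbital sums and then using the geometry of uniform visibility manifolds to upgrade the limits to a genuine Busemann density of the correct dimension.

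First, for each $s>\delta_{\Gamma}$ and each $q\in\widetilde{M}$, I introduce the finite Borel measure
\begin{displaymath}
\mu_{s,q} \;=\; \frac{1}{P(s,p,p)}\sum_{\alpha\in\Gamma} \mathrm{e}^{-s\, d(q,\alpha p)}\,\delta_{\alpha p}
\end{displaymath}
on the compact space $\overline{\widetilde{M}}$. The triangle inequality $|d(q,\alpha p)-d(p,\alpha p)|\leq d(p,q)$ gives $\mu_{s,q}(\overline{\widetilde{M}})\leq \mathrm{e}^{s\,d(p,q)}$, so the family is uniformly bounded in total mass. The analysis then splits into two cases. If $P(\delta_{\Gamma},p,p)=\infty$, I take $s_n\searrow \delta_{\Gamma}$ and extract, by the Banach--Alaoglu theorem, a subsequence along which $\mu_{s_n,q}$ converges weakly-$*$ to some finite measure $\mu_q$; a diagonal argument over a countable dense set of basepoints $q$ produces a single subsequence working simultaneously for all $q$. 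If $P(\delta_{\Gamma},p,p)<\infty$, I apply Patterson's trick and replace the summand $\mathrm{e}^{-s d(q,\alpha p)}$ by $h(d(q,\alpha p))\,\mathrm{e}^{-s d(q,\alpha p)}$, where $h:[0,\infty)\to (0,\infty)$ is a slowly varying function ($h(t+c)/h(t)\to 1$ for every fixed $c$) chosen so that the modified series diverges at $s=\delta_{\Gamma}$ while leaving the critical exponent unchanged; the rest of the argument is identical.

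Next, I verify that the limit $\{\mu_q\}$ is a $\delta_{\Gamma}$-dimensional Busemann density supported in $L(\Gamma)$. Since $P(s,p,p)\to\infty$ (after Patterson's modification if necessary) as $s\searrow \delta_{\Gamma}$, the mass placed at any fixed $\alpha p\in\widetilde{M}$ tends to zero, so every weak-$*$ limit measure is concentrated on $\widetilde{M}(\infty)$; moreover any cluster point of $\{\alpha p\}$ lies in $L(\Gamma)$ by definition, so $\mathrm{supp}(\mu_q)\subset L(\Gamma)$. The equivariance \eqref{eq3.2} is immediate from the relation $\mu_{s,\alpha q}(\alpha A)=\mu_{s,q}(A)$ at the prelimit level. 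The Radon--Nikodym identity \eqref{eq3.1} follows from the prelimit identity
\begin{displaymath}
\frac{\mathrm{d}\mu_{s,p}}{\mathrm{d}\mu_{s,q}}(\alpha p) \;=\; \mathrm{e}^{-s\bigl(d(p,\alpha p)-d(q,\alpha p)\bigr)},
\end{displaymath}
together with the convergence $d(p,\alpha p)-d(q,\alpha p)\to \beta_{\xi}(p,q)$ whenever $\alpha p\to\xi\in\widetilde{M}(\infty)$; crucially, on a uniform visibility manifold without conjugate points this convergence is \emph{uniform} on cone neighborhoods of $\xi$, thanks to Proposition~\ref{prop_2_7}(5) and the distance-function estimates developed in \cite{LLW,LW,LWW}. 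This uniform control is what allows the exponential Radon--Nikodym cocycle to survive the passage to the weak-$*$ limit.

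Finally, to pin down the support exactly, I combine $\mathrm{supp}(\mu_p)\subset L(\Gamma)$ with the $\Gamma$-equivariance and the minimality of the $\Gamma$-action on $L(\Gamma)$ proved in Theorem~\ref{mini}: the support is a nonempty closed $\Gamma$-invariant subset of $L(\Gamma)$, hence equals all of $L(\Gamma)$. The main obstacle I anticipate is exactly the uniform Busemann-function estimate used in the Radon--Nikodym step: unlike in the hyperbolic or CAT($0$) settings, on a manifold without conjugate points the Busemann function is not convex and no explicit formula is available, so the uniformity of $d(p,\alpha p)-d(q,\alpha p)\to\beta_{\xi}(p,q)$ has to be extracted from the uniform visibility axiom and the global geometric estimates carried over from the previous paper \cite{LLW}. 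Once that uniformity is in hand, every remaining step is a routine weak-$*$ limit argument.
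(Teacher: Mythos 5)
Your proposal follows essentially the same route as the paper: normalized orbital sums $\mu_{s,q}$ with uniform mass bounds from the triangle inequality, weak-$*$ limits along $s_k\searrow\delta_\Gamma$, Patterson's modification in the convergent case, $\Gamma$-equivariance checked at the prelimit level, and minimality of the $\Gamma$-action on $L(\Gamma)$ (Theorem~\ref{mini}) to upgrade $\mathrm{supp}(\mu_p)\subset L(\Gamma)$ to equality. The outline is correct and matches the paper's proof step for step.

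The one place where you stop short is precisely the step you yourself flag as the main obstacle: verifying the cocycle identity \eqref{eq3.1} requires that $d(p,\alpha_n p_0)-d(q,\alpha_n p_0)\to\beta_{\xi}(p,q)$ whenever $\alpha_n p_0\to\xi$ in the cone topology (equation \eqref{eq3.3} in the paper), and you only assert that this ``has to be extracted'' from uniform visibility and the estimates of \cite{LLW}, citing Proposition~\ref{prop_2_7}(5). That proposition (balls about far-out points lie inside cones) is not the relevant tool, and no uniformity over cone neighborhoods is actually needed --- pointwise convergence along every sequence, i.e.\ the continuous extension of $x\mapsto d(p,x)-d(q,x)$ to $\overline{\widetilde{M}}$, suffices, since $|d(p,x)-d(q,x)|\le d(p,q)$ makes the passage from $s_k$ to $\delta_\Gamma$ in the exponent harmless. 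The paper's argument for \eqref{eq3.3} is short and concrete: by Ruggiero's first-variation formula the gradient at $x$ of $-d(x,\alpha_n p_0)$ is $-c'_{x,\alpha_n p_0}(0)$ and that of $\beta_{\xi}(p,x)$ is $-c'_{x,\xi}(0)$; writing the difference of the two functions as an integral of these gradients along the geodesic segment $c_{p,q}$, the integrand is bounded by the angle $\measuredangle_{c_{p,q}(t)}(\xi,\alpha_n p_0)$, which is uniformly small for $t\in[0,d(p,q)]$ once $n$ is large, directly from the definition of the cone topology. Supplying this computation closes the gap; everything else in your proposal is routine and agrees with the paper.
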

\begin{proof}
Fix a point $p_{0}\in \widetilde{M}$, for each $p\in \widetilde{M}$ and $s > \delta_{\Gamma}$, define
$$\mu_{p.p_{0},s} \triangleq \frac{1}{P(s,p_{0},p_{0})}\sum_{\alpha\in \Gamma}e^{-s\cdot d(p,\alpha p_{0})}\mathfrak{D}_{\alpha p_{0}},$$
where $\mathfrak{D}_{\alpha p_{0}}$ is the Dirac measure at the point $\alpha p_{0}$.
By the triangle inequality
$$ -d(p,p_{0})+d(p_{0},\alpha p_{0})  ~\leqslant~ d(p,\alpha p_{0}) ~\leqslant~d(p,p_{0})+d(p_{0},\alpha p_{0}),$$
we know that $$e^{-s\cdot d(p,p_{0})}\leqslant \mu_{p.p_{0},s}(\overline{\widetilde{M}}) \leqslant e^{s\cdot d(p,p_{0})}.$$
Thus for each $p\in \widetilde{M}$ and $s > \delta_{\Gamma}$, $\mu_{p.p_{0},s}$ is a finite measure that satisfies
$\Gamma(p_{0})\subset \mathrm{supp}(\mu_{p.p_{0},s})\subset  \overline{\Gamma(p_{0})}$.
For any sequence $\{s_{k}\}^{+\infty}_{n=1}\subset \mathbb{R}$ satisfies $s_{k}\searrow \delta_{\Gamma}$,
we denote by the weak limit of $\{\mu_{p,p_{0},s_{k}}\}^{+\infty}_{k=1}$ is $\mu_{p}$, i.e., $\mu_{p}=\lim_{k\rightarrow +\infty}\mu_{p,p_{0},s_{k}}$.
One can see that the different choices of $\{s_{k}\}^{+\infty}_{k=1}$ may lead different weak limits. In fact,
we have proved that when the conical limit set has positive measure in $\mu_{p}$, the weak limit is unique up to a a positive multiple (\cite{LLW} Proposition 5.7).

Now we suppose that the Poincar\'e series is of divergent type, i.e., $P(\delta_{\Gamma},p_{0},p_{0})=+\infty$. Since $\Gamma$ is a discrete subgroup,
the support of $\mu_{p}$ will be pushed to the limit set in the ideal boundary, i.e., $\mathrm{supp}(\mu_{p})\subset  \overline{\Gamma(p_{0})} \cap \widetilde{M}(\infty)$.
Thus for each $p\in \widetilde{M}$, $\mu_{p}$ is a positive finite measure on the limit set $L(\Gamma)$.

First, we show that for any $p,q \in\widetilde{M}$ and $\xi\in L(\Gamma)\subset\widetilde{M}(\infty)$, \eqref{eq3.1} is valid.

In fact, suppose that $\alpha_{n}(p_{0})\rightarrow \xi$ in the cone topology, then the ratio of coefficients of $\alpha_{n}(p_{0})$ in
$\mu_{p,p_{0},s_{k}}$ and $\mu_{q,p_{0},s_{k}}$ is
$$\frac{e^{-s_{k}\cdot d(p,\alpha_{n}p_{0})}}{e^{-s_{k}\cdot d(q,\alpha_{n}p_{0})}}=e^{-s_{k}\cdot \{d(p,\alpha_{n}p_{0})-d(q,\alpha_{n}p_{0})\}}.$$
Thus we only need to prove that
\begin{equation}\label{eq3.3}
     \lim_{n\rightarrow +\infty}\{d(p,\alpha_{n}p_{0})-d(q,\alpha_{n}p_{0})\}=\beta_{\xi}(p,q).
\end{equation}

Let $a \triangleq d(p,q)>0$, and $c_{p,q}:[0,a]\rightarrow \widetilde{M}$ be the connecting geodesic segment with
$c_{p,q}(0)=p$ and $c_{p,q}(a)=q$. For any $\epsilon >0$, since $\alpha_{n}(p_{0})\rightarrow \xi$, there exists an $N \in \mathbb{N}$,
such that for any $n \geqslant N$,
\begin{equation}\label{eq3.4}
      \theta_{t}\triangleq \measuredangle_{c_{p,q}(t)}(\xi, \alpha_{n}(p_{0})) < \frac{\epsilon}{a}, ~~~t\in [0,a].
\end{equation}
We view $\{d(p,\alpha_{n}p_{0})-d(\ast,\alpha_{n}p_{0})\}$ and $\beta_{\xi}(p,\ast)$ as functions on $\widetilde{M}$.
By \cite{Ru} Lemma 4.2, we know that
\begin{equation}\label{eq3.5}
      \mathrm{grad}\{d(p,\alpha_{n}p_{0})-d(q,\alpha_{n}p_{0})\}=-c'_{q,\alpha_{n}p_{0}}(0),~~~\mathrm{grad} \beta_{\xi}(p,q)=-c'_{q,\xi}(0),
\end{equation}
where both $c_{q,\alpha_{n}p_{0}}$ and $c_{q,\xi}$ are unit speed connecting geodesics starting from the point $q$.
By \eqref{eq3.4} and \eqref{eq3.5}, for $n \geqslant N$, we have
\begin{displaymath}
		\begin{aligned}
		|\{d(p,\alpha_{n}p_{0})-d(q,\alpha_{n}p_{0})\} - \beta_{\xi}(p,q)|
        & = \left|\int^{a}_{0}\frac{d}{dt}\{ \{d(p,\alpha_{n}p_{0})-d(c_{p,q}(t),\alpha_{n}p_{0})\}- \beta_{\xi}(p,c_{p,q}(t)) \} dt  \right|    \\
	    & =   \left|\int^{a}_{0}<-c'_{c_{p,q}(t),\alpha_{n}p_{0}}(0)+c'_{c_{p,q}(t),\xi}(0),c'_{p,q}(t)> dt  \right|                              \\
        & \leqslant   \int^{a}_{0} \|-c'_{c_{p,q}(t),\alpha_{n}p_{0}}(0)+c'_{c_{p,q}(t),\xi}(0) \|   \cdot \|c'_{p,q}(t)\|dt   \\
        & \leqslant   \int^{a}_{0}\theta_{t}dt ~ < ~  \epsilon.  \\
		\end{aligned}
\end{displaymath}
Thus \eqref{eq3.3} holds, and and consequently, \eqref{eq3.1} also holds.

Next, we show that \eqref{eq3.2} holds.

In fact, for any $\alpha\in \Gamma$ and $s_{k}>\delta_{\Gamma}$,
\begin{displaymath}
		\begin{aligned}
		\mu_{\alpha p,p_{0},s_{k}}
        & = \frac{1}{P(s_{k},p_{0},p_{0})}\sum_{\gamma\in \Gamma}e^{-s_{k}\cdot d(\alpha p,\gamma p_{0})}\mathfrak{D}_{\gamma p_{0}}    \\
	    & =  \frac{1}{P(s_{k},p_{0},p_{0})}\sum_{\gamma\in \Gamma}e^{-s_{k}\cdot d(\alpha p,\alpha\gamma p_{0})}\mathfrak{D}_{\alpha\gamma p_{0}}                             \\
        & =  \frac{1}{P(s_{k},p_{0},p_{0})}\sum_{\gamma\in \Gamma}e^{-s_{k}\cdot d(p,\gamma p_{0})}\mathfrak{D}_{\alpha\gamma p_{0}}.      \\
		\end{aligned}
\end{displaymath}
Therefore for any Borel subset $B\subset \overline{\widetilde{M}}$,
$\mu_{\alpha p,p_{0},s_{k}}(\alpha B)=\mu_{p,p_{0},s_{k}}(B)$. Let $k\rightarrow +\infty$, we have that for any
measurable subset $A\subset\widetilde{M}(\infty)$, $\mu_{\alpha p}(\alpha A)=\mu_p(A)$, i.e., \eqref{eq3.2} holds.

Last, we only need to prove that for each $p\in \widetilde{M}$, $\mathrm{supp}(\mu_{p})=L(\Gamma)$.
This is an easy consequence by \eqref{eq3.1} and Theorem~\ref{mini}.

Note that all the discussions above are under the assumption that $\Gamma$ is of divergent type,
when $\Gamma$ is not of divergent type, Patterson proposed a clever method to overcome this difficulty in \cite{Pa}.
He constructed a positive monotonic increasing function $h$ defined on $\mathbb{R}^{+}$, such that the modified Poincar\'e series
$$\widetilde{P}(s,p,q)=\sum_{\alpha\in\Gamma}h(d(p,\alpha q))\mathrm{e}^{-s\cdot d(p,\alpha q)}$$
has the same critical exponent with the original Poincar\'e series $P(s,p,q)$, and $\widetilde{P}(s,p,q)$ is of divergent type.
Then one can easily check that both \eqref{eq3.1} and \eqref{eq3.2} hold for the modified Poincar\'e series $\widetilde{P}(s,p,q)$.
\end{proof}

A Busemann density constructed as the way described in Theorem~\ref{PS} is called a $\delta_{\Gamma}$-dimensional $\mathbf{Patterson}$-$\mathbf{Sullivan~measure}$.
In the following text, when we say ``Patterson-Sullivan measure", what we mean is always the $\delta_{\Gamma}$-dimensional Busemann density.

The following Mohsen shadow lemma was proved in our previous paper~\cite{LLW}.
For any point $p \in \widetilde{M}$ and any subset $A \subset \widetilde{M}$,
the $\mathbf{shadow~of~A~in~the~ideal~boundary~from~p}$ is defined as
$$\mathrm{pr}_{p}(A) \triangleq \{c_{p,z}(+\infty)| z \in A\}\subset \widetilde{M}(\infty).$$

\begin{proposition}[Mohsen Shadow Lemma~\cite{LLW}]\label{shadow}
	Let $\widetilde{M}$ be a complete simply connected and visibility manifold without conjugate points and $\Gamma$ is a non-elementary discrete subgroup of $\text{Iso}(\widetilde{M})$. Suppose ${\{\mu_p\}}_{p\in\widetilde{M}}$
is an $r$-dimensional Busemann density and $K\subset\widetilde{M}$ is a compact set,
then for $R>0$ large enough, there exists a constant $C>0$, such that for any $\alpha\in\Gamma$, and any pair of points $p,q\in K$, we have
	\begin{displaymath}
		\frac 1C\leq \frac{\mu_p(\mathrm{pr}_p(B(\alpha q,R)))}{\mathrm{e}^{-r\cdot d(p,\alpha q)}}\leq C.
	\end{displaymath}
\end{proposition}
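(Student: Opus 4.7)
My plan is to combine the conformal property~\eqref{eq3.1} of the Busemann density with a compactness/contradiction argument for the lower bound. Applying \eqref{eq3.1},
\begin{displaymath}
\mu_p(\mathrm{pr}_p(B(\alpha q,R)))=\int_{\mathrm{pr}_p(B(\alpha q,R))}\mathrm{e}^{-r\beta_{\xi}(p,\alpha q)}\,\mathrm{d}\mu_{\alpha q}(\xi).
\end{displaymath}
For $\xi\in\mathrm{pr}_p(B(\alpha q,R))$, choose $z=c_{p,\xi}(t_0)\in B(\alpha q,R)$ with $t_0=d(p,z)$. Writing $\beta_{\xi}(p,\alpha q)=\lim_{t\to\infty}(t-d(\alpha q,c_{p,\xi}(t)))$ and using that $d(z,c_{p,\xi}(t))=t-t_0$ for $t>t_0$ together with $d(z,\alpha q)\leq R$, the triangle inequality gives $|\beta_{\xi}(p,\alpha q)-d(p,\alpha q)|\leq 2R$. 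Hence the conformal factor lies in $[\mathrm{e}^{-r(d(p,\alpha q)+2R)},\mathrm{e}^{-r(d(p,\alpha q)-2R)}]$, and the problem reduces to controlling $\mu_{\alpha q}(\mathrm{pr}_p(B(\alpha q,R)))$. Since isometries commute with shadows, $\alpha^{-1}\mathrm{pr}_p(B(\alpha q,R))=\mathrm{pr}_{\alpha^{-1}p}(B(q,R))$, and \eqref{eq3.2} yields $\mu_{\alpha q}(\mathrm{pr}_p(B(\alpha q,R)))=\mu_q(\mathrm{pr}_{\alpha^{-1}p}(B(q,R)))$. The upper bound is then immediate, as this quantity is dominated by $\mu_q(\widetilde{M}(\infty))$, which by \eqref{eq3.1} is continuous in $q$ and hence uniformly bounded on $K$.

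The central task is the uniform lower bound $c_0\triangleq\inf\{\mu_q(\mathrm{pr}_{\alpha^{-1}p}(B(q,R))):\alpha\in\Gamma,\,p,q\in K\}>0$, which I argue by contradiction. Suppose sequences $p_n,q_n\in K$ and $\alpha_n\in\Gamma$ produce shadow measures tending to zero. By compactness of $K$ and of $\overline{\widetilde{M}}$ under the cone topology, pass to a subsequence along which $p_n\to p_\infty$, $q_n\to q_\infty$ in $K$, and $\alpha_n^{-1}p_n\to\xi\in\overline{\widetilde{M}}$. If $\xi\in\widetilde{M}$, then $d(p_n,\alpha_n p_\infty)=d(\alpha_n^{-1}p_n,p_\infty)$ stays bounded, proper discontinuity of the $\Gamma$-action forces $\alpha_n$ to be eventually constant along a subsequence, and Proposition~\ref{prop_2_7}(1) shows $c_{\alpha_n^{-1}p_n,\eta}\to c_{\xi,\eta}$ uniformly on compact $t$-intervals for each fixed $\eta$; consequently the shadows $\mathrm{pr}_{\alpha_n^{-1}p_n}(B(q_n,R))$ eventually contain the open set $\mathrm{pr}_\xi(B(q_\infty,R/2))$, which for $R$ large meets $L(\Gamma)$ and therefore carries positive $\mu_{q_\infty}$-mass by the support property from Theorem~\ref{PS}, a contradiction. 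If $\xi\in\widetilde{M}(\infty)$, the identity $d(\alpha_n^{-1}p_n,\alpha_n^{-1}p_\infty)=d(p_n,p_\infty)\to 0$ forces $\alpha_n^{-1}p_\infty\to\xi$ as well, so $\xi\in L(\Gamma)$. For any $\eta\in\widetilde{M}(\infty)\setminus\{\xi\}$ a connecting geodesic $c_{\xi,\eta}$ exists (Proposition~\ref{prop_2_7}(2)), and the visibility-based continuity of geodesics in boundary data (again derived from Proposition~\ref{prop_2_7}(1)) yields that, whenever $c_{\xi,\eta}$ meets $B(q_\infty,R/2)$, the connecting geodesic $c_{\alpha_n^{-1}p_n,\eta}$ meets $B(q_n,R)$ for all large $n$, i.e.\ $\eta\in\mathrm{pr}_{\alpha_n^{-1}p_n}(B(q_n,R))$. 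Invoking the minimality of Theorem~\ref{mini} and non-elementarity of $\Gamma$, I produce $\eta\in L(\Gamma)\setminus\{\xi\}$ and an open neighborhood $U\ni\eta$ in $\widetilde{M}(\infty)$ on which this containment holds, forcing $\mu_{q_\infty}(U)>0$ and the desired contradiction.

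The main obstacle is the second case of the lower bound: when $\alpha_n^{-1}p_n$ escapes to the ideal boundary, the shadows shrink toward cone neighborhoods at infinity, and one must convert geometric information about connecting geodesics into a statement about \emph{positive mass} for the limiting shadow. This requires orchestrating visibility (for continuity of geodesics to the boundary, so that shadows persist in the limit), minimality plus non-elementarity (to locate a limit point $\eta$ in generic position relative to $q_\infty$), and the support condition $\mathrm{supp}(\mu_q)=L(\Gamma)$ from Theorem~\ref{PS} simultaneously. Taking $R$ large enough is precisely what ensures that, uniformly in $\xi\in L(\Gamma)$, some connecting geodesic from $\xi$ to a limit point of $\Gamma$ passes within $R/2$ of the compact set $K$, which is the geometric content of the ``$R$ large enough'' threshold in the statement.
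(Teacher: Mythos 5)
The paper does not actually prove this proposition here --- it is quoted from \cite{LLW} --- so there is no in-text proof to compare against; your argument is the standard Sullivan--Mohsen shadow-lemma proof (conformal change of basepoint to $\mu_{\alpha q}$, the estimate $|\beta_{\xi}(p,\alpha q)-d(p,\alpha q)|\leq 2R$ for $\xi$ in the shadow, equivariance to reduce to $\mu_q(\mathrm{pr}_{\alpha^{-1}p}(B(q,R)))$, and a compactness/contradiction argument for the uniform lower bound), which is surely the route taken in \cite{LLW}. The upper bound and the reduction are fine. Two points in the lower bound, however, need repair.

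First, you repeatedly invoke $\mathrm{supp}(\mu_q)=L(\Gamma)$ ``from Theorem~\ref{PS}'' and the minimality of Theorem~\ref{mini} to conclude $\mu_{q_\infty}(U)>0$ for a neighborhood $U$ of a point $\eta\in L(\Gamma)$. But the proposition is stated for an arbitrary $r$-dimensional Busemann density, and Definition~\ref{def2} only requires $\mathrm{supp}(\mu_p)\subset L(\Gamma)$; equality of the support with $L(\Gamma)$ is a feature of the particular Patterson--Sullivan construction, not a hypothesis here. The fix is to choose $\eta$ directly in $\mathrm{supp}(\mu_{q_\infty})\setminus\{\xi\}$ (any neighborhood of such an $\eta$ automatically has positive mass), and then to rule out separately the degenerate case $\mathrm{supp}(\mu_{q_\infty})=\{\xi\}$: if every $\mu_q$ were a Dirac mass at $\xi$, the equivariance \eqref{eq3.2} together with \eqref{eq3.1} would force $\alpha\xi=\xi$ for all $\alpha\in\Gamma$, contradicting non-elementarity. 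That exclusion is the real content of the lower bound and is missing from your write-up.

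Second, in the case $\alpha_n^{-1}p_n\to\xi\in\widetilde{M}(\infty)$ you argue via ``continuity of connecting geodesics in boundary data'': if $c_{\xi,\eta}$ meets $B(q_\infty,R/2)$ then $c_{\alpha_n^{-1}p_n,\eta}$ meets $B(q_n,R)$ for large $n$. Without Axiom~2 (which the shadow lemma does not assume) connecting geodesics between boundary points need not be unique, so a limit of the geodesics $c_{\alpha_n^{-1}p_n,\eta}$ is only \emph{some} geodesic from $\xi$ to $\eta$, not necessarily the one you selected to pass near $q_\infty$; the containment of shadows can fail as stated. The robust argument runs in the contrapositive through visibility itself: if $\eta\notin\mathrm{pr}_{\alpha_n^{-1}p_n}(B(q_n,R))$ then $d(q_n,c_{\alpha_n^{-1}p_n,\eta})\geq R$, hence $\measuredangle_{q_n}(\alpha_n^{-1}p_n,\eta)\leq\epsilon(R)$ with $\epsilon(R)\to 0$ as $R\to\infty$, so the complement of the shadow is trapped in a small angular cone about the direction from $q_n$ toward $\xi$. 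Passing to the limit, $\mu_{q_\infty}$ would concentrate on arbitrarily small cones around $\xi$, i.e.\ would be an atom at $\xi$, and one concludes by the exclusion above. This replaces both the geodesic-continuity step and the search for a specific $\eta$ by a single cone estimate, and it is where ``$R$ large enough'' genuinely enters.
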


\begin{proposition}\label{le1}
Let $\widetilde{M}$ be a complete simply connected and visibility manifold without conjugate points and $\Gamma$ is a non-elementary discrete subgroup of $\text{Iso}(\widetilde{M})$.
Let $r\in \mathbb{R}$ and $\{\mu_{p}\}_{p\in \widetilde{M}}$ be an $r$-dimensional Busemann density, then
\begin{enumerate}
		\item For any $p,q\in\widetilde{M}$, there exists a positive constant $D=D(p,q)$, such that for any $n \in \mathbb{N}$,
           $$\sum_{\alpha \in \Gamma,~n-1<d(p,\alpha q)\leqslant n} e^{-r\cdot d(p,\alpha q)} \leqslant D. $$
		\item $r \geqslant \delta_{\Gamma}.$
	\end{enumerate}
\end{proposition}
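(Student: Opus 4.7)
The plan is to prove (1) via the Mohsen Shadow Lemma combined with a bounded-multiplicity argument for the shadows, and then to derive (2) from (1) by a geometric-series comparison with the Poincaré series. Throughout, fix $p,q\in\widetilde{M}$ and include them in a compact set $K$; choose $R>0$ large enough that Proposition~\ref{shadow} applies, producing a constant $C>0$ with
$$e^{-r\cdot d(p,\alpha q)}\ \leq\ C\cdot \mu_p\bigl(\mathrm{pr}_p(B(\alpha q,R))\bigr),\qquad \alpha\in\Gamma.$$

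The heart of part (1) is the claim that for each $n\in\mathbb{N}$, any $\xi\in\widetilde{M}(\infty)$ belongs to at most $N$ of the shadows $\mathrm{pr}_p(B(\alpha q,R))$ with $n-1<d(p,\alpha q)\leq n$, where $N$ is a constant independent of $n$. To see this, if $\xi\in\mathrm{pr}_p(B(\alpha_i q,R))$ for several distinct $\alpha_i$, each $\alpha_i q$ lies within distance $R$ of some geodesic from $p$ to $\xi$. By Proposition~\ref{prop_2_7}(4), any two geodesics from $p$ to $\xi$ stay within $2R_{\pi/2}$ of each other, so fixing one such geodesic $c^*$, every $\alpha_i q$ lies within $R':=R+2R_{\pi/2}$ of $c^*$. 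Projecting onto $c^*$ and using the annulus constraint $d(p,\alpha_i q)\in(n-1,n]$ together with the triangle inequality, one finds that all these projections lie on a segment of length at most $1+2R'$, hence
$$d(\alpha_i q,\alpha_j q)\ \leq\ 1+4R'.$$
Thus all the $\alpha_i q$ lie in a single ball of radius $1+4R'$, and the discreteness of $\Gamma$ (each $\gamma\in\Gamma$ with $d(q,\gamma q)$ bounded gives only finitely many elements) yields a uniform bound $N=N(R,q)$ on their number.

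Combining the shadow estimate with this multiplicity bound gives
$$\sum_{\substack{\alpha\in\Gamma\\ n-1<d(p,\alpha q)\leq n}} e^{-r\cdot d(p,\alpha q)}\ \leq\ C\sum_{\alpha}\mu_p\bigl(\mathrm{pr}_p(B(\alpha q,R))\bigr)\ \leq\ CN\,\mu_p(\widetilde{M}(\infty))\ =:\ D,$$
which proves (1). For (2), decompose the Poincaré series by annuli: for any $s>r$ and any $\alpha$ with $d(p,\alpha q)>n-1$ we have $e^{-s\cdot d(p,\alpha q)}\leq e^{-(s-r)(n-1)}\,e^{-r\cdot d(p,\alpha q)}$, so applying (1) term by term,
$$P(s,p,q)\ =\ \sum_{n=1}^{\infty}\sum_{\substack{\alpha\in\Gamma\\ n-1<d(p,\alpha q)\leq n}} e^{-s\cdot d(p,\alpha q)}\ \leq\ D\sum_{n=1}^{\infty}e^{-(s-r)(n-1)}\ <\ \infty.$$
Hence $\delta_\Gamma\leq s$ for every $s>r$, giving $\delta_\Gamma\leq r$.

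The main obstacle is the bounded-multiplicity step. The crucial non-uniformity one must rule out is that distinct $\alpha_i q$ might be close to different geodesics joining $p$ to $\xi$ (since Axiom~2 is not assumed here), and that the annular region near a geodesic could in principle spread out in high dimensions. Invoking Proposition~\ref{prop_2_7}(4) to confine all such geodesics to a uniform tube around a single $c^*$ is what makes the diameter estimate $1+4R'$ independent of $\xi$ and $n$; the rest is a clean application of discreteness.
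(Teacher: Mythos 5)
Your proof of part (1) is correct and follows essentially the same route as the paper: the Mohsen shadow lemma converts $e^{-r\,d(p,\alpha q)}$ into shadow measures, and a bounded-multiplicity argument (overlapping shadows in the annulus $n-1<d(p,\alpha q)\leq n$ force $d(\alpha q,\beta q)$ to be uniformly bounded, so discreteness caps the multiplicity) finishes the estimate. Your extra precaution about several geodesics from $p$ to $\xi$ is harmless but unnecessary: for a visibility manifold without conjugate points the ray from an interior point $p$ to a boundary point $\xi$ is unique (Eberlein, as quoted in Section 2), which is what the paper tacitly uses when it takes points of $c_{p,\xi}\cap B(\alpha q,R)$; your tube argument via Proposition~\ref{prop_2_7}(4) just enlarges the constant from $1+4R$ to $1+4R'$.

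For part (2) you take a genuinely different, and in fact more self-contained, final step. The paper bounds the annulus counting function $a_n=\#\Gamma_n$ by $a_n\leq D e^{rn}$ and then lets $n\to\infty$ in $\frac{1}{n}\ln(a_ne^{-rn})\leq\frac{1}{n}\ln D$, which implicitly invokes the identification of $\delta_\Gamma$ with $\limsup_n\frac{1}{n}\ln a_n$. You instead sum the annulus bounds directly against a geometric series to show $P(s,p,q)<\infty$ for every $s>r$, hence $\delta_\Gamma\leq r$ straight from the definition of the critical exponent. This avoids appealing to the growth-rate characterization (whose proof is precisely your geometric-series comparison), so your version is the cleaner of the two; both are correct. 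The only cosmetic point worth tidying is the finitely many $\alpha$ with $d(p,\alpha q)=0$ (or, as the paper does, index the annuli so that $n=0$ covers them).
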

\begin{proof}
$1$. ~Denote by $K\triangleq \{p,q\}$. let $R>0$ and $C=C(x,y)>0$ be the constants in the Mohsen shadow lemma (Proposition~\ref{shadow}).
Let $\mathfrak{a} \triangleq \# \{\alpha \in \Gamma ~|~d(q,\alpha q) \leqslant 1+4R\}$ and
$\Gamma_{n}\triangleq \{\alpha \in \Gamma ~|~n-1<d(p,\alpha q) \leqslant n\}$.
We can see that $\mathfrak{a}<\infty$ since $\Gamma$ is a discrete subgroup.

For any $\alpha \in \Gamma_{n}$ and $\xi \in \mathrm{pr}_{p}(B(\alpha q, R))$,
take any point in the intersections of the geodesic ray $c_{p,\xi}$ and the open ball $B(\alpha q, R)$, and denote it by $q_{\alpha}$,
we have that $$n-1-R < d(p,q_{\alpha})\leqslant n + R.$$ Thus if $\xi\in \mathrm{pr}_{p}(B(\alpha q, R)) \cap \mathrm{pr}_{p}(B(\beta q, R))$,
where $\alpha, \beta\in \Gamma_{n}$, we have
\begin{displaymath}
		\begin{aligned}
		d(\alpha q, \beta q)
        & \leqslant d(\alpha q, q_{\alpha}) + d(q_{\alpha},q_{\beta}) + d(q_{\beta},\beta q)   \\
	    & =  d(\alpha q, q_{\alpha}) + | d(q,q_{\beta})-d(q,q_{\alpha}) | + d(q_{\beta},\beta q)                \\
        & \leqslant R + ((n+R)-(n-1-R))+R        \\
        & = 1+ 4R.        \\
		\end{aligned}
\end{displaymath}
Hence for any $\xi \in \widetilde{M}(\infty)$, it can be shadowed by at most $\mathfrak{a}$ $B(\alpha q,R)$ from the point $p$, where $\alpha\in \Gamma_{n}$.
Thus
$$\sum_{\alpha\in \Gamma_{n}}\mu_{p}(\mathrm{pr}_{p}(B(\alpha q, R))) \leqslant
\mathfrak{a} \mu_{p}\left(\bigcup_{\alpha\in \Gamma_{n}}\mathrm{pr}_{p}(B(\alpha q, R))\right). $$
Furthermore, by Mohsen shadow lemma, we have
\begin{displaymath}
		\begin{aligned}
		\sum_{\alpha\in \Gamma_{n}}e^{-r\cdot d(p,\alpha q)}
        & \leqslant C\sum_{\alpha\in \Gamma_{n}}\mu_{p}(\mathrm{pr}_{p}(B(\alpha q, R))) \\
	    &  \leqslant C\mathfrak{a} \mu_{p}\left(\bigcup_{\alpha\in \Gamma_{n}}\mathrm{pr}_{p}(B(\alpha q, R))\right)\\
        &  \leqslant C\mathfrak{a} \|\mu_{p}\|,\\
		\end{aligned}
\end{displaymath}
where $ \|\mu_{p}\|\triangleq \mu_{p}(\widetilde{M}(\infty))<\infty$.
Denote by $D \triangleq C\mathfrak{a} \|\mu_{p}\|<\infty$, then the first assertion of this lemma holds.

$2$. ~Denote by $a_{n}\triangleq \#\Gamma_{n}=\#\{\alpha \in \Gamma ~|~n-1<d(p,\alpha q) \leqslant n\}$.

$\mathbf{Case ~~I}$. ~~~$r \geqslant 0$.

Due to the first assertion of this lemma, we have that
$$a_{n}e^{-rn}
\leqslant \sum_{\alpha\in \Gamma_{n}}e^{-r\cdot d(p,\alpha q)}
\leqslant C\mathfrak{a} \|\mu_{p}\| = D.$$
Thus $\frac{1}{n}\ln(a_{n}e^{-rn}) \leqslant \frac{1}{n}\ln D$, let $n\rightarrow +\infty$, we get $r \geqslant \delta_{\Gamma}$.

$\mathbf{Case ~~II}$. ~~~$r < 0$.

Similar to the Case I, we have
$$a_{n}e^{-r(n-1)}
\leqslant \sum_{\alpha\in \Gamma_{n}}e^{-r\cdot d(p,\alpha q)}
\leqslant C\mathfrak{a} \|\mu_{p}\| = D.$$
Thus
$\frac{1}{n}\ln(a_{n}e^{-r(n-1)}) \leqslant \frac{1}{n}\ln D$, let $n\rightarrow +\infty$, we get $r \geqslant \delta_{\Gamma}$.
\end{proof}

\begin{theorem}[Myrberg Type Dichotomy]\label{Myr}
Let $M$ be a complete uniform visibility manifold without conjugate points that satisfies Axiom 2,
${\{\mu_q\}}_{q\in\widetilde{M}}$ be a Patterson-Sullivan measure, $p\in\widetilde{M}$ is arbitrarily chosen and $\mathfrak{m}$ is the
corresponding $\delta_{\Gamma}$-dimensional BMS measure. Then the following assertions are equivalent:
\begin{enumerate}
		\item[1.] The Myrberg limit set $L_{m}(\Gamma)$ satisfies that $\mu_{p}(L_{m}(\Gamma)) >0$.
		\item[2.] The geodesic flow $\phi_t: T^{1}M \rightarrow T^{1}M $ is conservative with respect to $\mathfrak{m}$.
	\end{enumerate}
\end{theorem}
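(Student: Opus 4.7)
The plan is to prove the two implications separately, leveraging the Hopf--Tsuji--Sullivan dichotomy (Theorem~\ref{HTS}), the Mohsen shadow lemma (Proposition~\ref{shadow}), and the inclusion $L_m(\Gamma)\subset L_c(\Gamma)$ (Proposition~\ref{myr}).

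For the direction $(1)\Rightarrow(2)$ I would argue by contrapositive. Suppose the geodesic flow is not conservative with respect to $\mathfrak{m}$; by the HTS dichotomy, statement~(5) fails, so the Poincar\'e series $\sum_{\alpha\in\Gamma}e^{-\delta_\Gamma d(p,\alpha p)}$ converges. The Mohsen shadow lemma then gives $\mu_p(\mathrm{pr}_p(B(\alpha p,R)))\leq C\,e^{-\delta_\Gamma d(p,\alpha p)}$ for all sufficiently large $R$, so $\sum_\alpha\mu_p(\mathrm{pr}_p(B(\alpha p,R)))<\infty$. The Borel--Cantelli lemma forces $\mu_p(\limsup_\alpha\mathrm{pr}_p(B(\alpha p,R)))=0$. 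Every conical limit point with conical constant at most $R$ lies in infinitely many such shadows, so setting $L_c^R(\Gamma)\subseteq\limsup_\alpha\mathrm{pr}_p(B(\alpha p,R))$ and exhausting by $R\to+\infty$ yields $\mu_p(L_c(\Gamma))=0$. By Proposition~\ref{myr}, $\mu_p(L_m(\Gamma))=0$, contradicting~(1).

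For $(2)\Rightarrow(1)$ I would in fact establish the stronger statement $\mu_p(L_m(\Gamma))=\mu_p(\widetilde{M}(\infty))$, which simultaneously yields Corollary~\ref{cor}. By the HTS dichotomy, conservativity is equivalent to ergodicity of $\phi_t$ on $(T^1M,\mathfrak{m})$; since the cone topology is second countable and every nonempty open subset of $\mathrm{supp}(\mathfrak{m})$ has positive measure, ergodicity together with Poincar\'e recurrence implies that $\mathfrak{m}$-a.e.\ $v\in T^1M$ has its forward orbit dense in $\mathrm{supp}(\mathfrak{m})$. Lifting to $T^1\widetilde{M}$: for $\widetilde{\mathfrak{m}}$-a.e.\ $v$ and for every target $w\in T^1\widetilde{M}$ with $(c_w(-\infty),c_w(+\infty))\in L(\Gamma)^2$ off the diagonal, there exist $\alpha_n\in\Gamma$ and $t_n\to+\infty$ with $\alpha_n\phi_{t_n}(v)\to w$. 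I would then show that $\xi:=c_v(+\infty)$ is a Myrberg limit point: given $\eta\neq\eta'$ in $L(\Gamma)$ and $x\in\widetilde{M}$, choose $w$ with $c_w(\pm\infty)=(\eta,\eta')$; the convergence $\alpha_n\phi_{t_n}v\to w$ immediately yields $\alpha_n\xi\to\eta'$ and $\alpha_n c_v(t_n)\to\pi(w)$. For the remaining $\alpha_n x\to\eta$, uniform visibility forces $\measuredangle_{c_v(t_n)}(p,x)\to 0$ as $t_n\to+\infty$, so the unit direction from $c_v(t_n)$ to $x$ approaches the backward tangent $-c_v'(t_n)$; applying the isometry $\alpha_n$ and using both $\alpha_n c_v'(t_n)\to w$ and $d(\alpha_n x,\alpha_n c_v(t_n))=d(x,c_v(t_n))\to+\infty$, I would conclude that $\alpha_n x$ escapes toward $c_{-w}(+\infty)=\eta$ in the cone topology.

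The hard part is the last geometric step of the $(2)\Rightarrow(1)$ argument: converting the flow-theoretic density statement $\alpha_n\phi_{t_n}v\to w$ in $T^1\widetilde{M}$ into the genuinely pointwise Myrberg condition for an arbitrary interior basepoint $x\in\widetilde{M}$. In negatively curved settings this would follow at once from CAT($-\kappa$) comparison, but in the no-conjugate-points category one must instead exploit the uniform visibility axiom and its refinement Proposition~\ref{prop_2_7}(5) to pin down the direction in which $\alpha_n x$ escapes to infinity. A secondary technicality is ensuring that the times can be chosen with $t_n\to+\infty$ rather than as an unsigned subsequence, which follows from Poincar\'e recurrence in the conservative regime.
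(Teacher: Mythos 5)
Your proposal is correct and follows essentially the same route as the paper: the forward implication reduces to the HTS dichotomy via $L_m(\Gamma)\subset L_c(\Gamma)$ (your shadow-lemma/Borel--Cantelli argument merely re-derives the ``convergence type $\Rightarrow$ null conical limit set'' half of that dichotomy instead of citing it), and the reverse implication runs ergodicity plus conservativity over a countable basis of $\mathrm{supp}(\mathfrak{m})$ to get almost-everywhere dense forward orbits and then identifies the forward endpoints of such orbits as Myrberg points. The only substantive difference is that you spell out the geometric step converting $\alpha_n\phi_{t_n}v\to w$ into $\alpha_n\xi\to\eta'$ and $\alpha_n x\to\eta$ (via the continuity of $(\bm{v},t)\mapsto c_{\bm{v}}(t)$ and uniform visibility), which the paper asserts as the bare identity $L_m(\Gamma)=\{c_v(+\infty)\mid v\in\bigcap_{A}\mathfrak{L}(A)\}$ without proof; your version of that step is sound, modulo replacing $\measuredangle_{c_v(t_n)}(p,x)\to 0$ by $\measuredangle_{c_v(t_n)}(c_v(0),x)\to 0$ so that the comparison with the backward tangent $-c_v'(t_n)$ is literal.
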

\begin{proof}
$``\Longrightarrow"$~~Suppose that $\mu_{p}(L_{m}(\Gamma)) >0$, by Proposition~\ref{myr},
$\mu_{p}(L_{c}(\Gamma)) \geqslant \mu_{p}(L_{m}(\Gamma))>0$.
Then the Hopf-Tsuji-Sullivan dichotomy (Theorem~\ref{HTS}) implies that $\mu_{p}(L_{c}(\Gamma)) =\mu_{p}(\widetilde{M}(\infty))$,
thus again by the HTS dichotomy, the geodesic flow $\phi_t$ is conservative with respect to the BMS measure $\mathfrak{m}$.

$``\Longleftarrow"$~~Let $\mathfrak{S}$ be the countable basis of the $\mathrm{supp} (\widetilde{\mathfrak{m}})$,
for each $\emptyset \neq A \in \mathfrak{S}$, denote by
$$\mathfrak{L}(A)\triangleq \left\{v \in T^{1}\widetilde{M} ~\mid ~\exists \{\alpha_{n}\}^{\infty}_{n=1}\subset \Gamma,
\{t_{n}\}^{\infty}_{n=1}\subset \mathbb{R}, s.t. ~t_{n}\rightarrow +\infty ~and ~\phi_{t_{n}}(v)\in \alpha_{n}(A)  \right\}.$$
It's easy to see that $\mathfrak{L}(A)$ is an invariant set of the geodesic flow.
Since the geodesic flow $\phi_t: T^{1}M \rightarrow T^{1}M $ is conservative with respect to $\mathfrak{m}$,
there exists a subset $A'\subset A$ such that $\mathfrak{\widetilde{m}}(A')=\mathfrak{\widetilde{m}}(A)$ and $A'\subset \mathfrak{L}(A)$,
thus $\mathfrak{\widetilde{m}}(\mathfrak{L}(A))\geqslant \mathfrak{\widetilde{m}}(A')=\mathfrak{\widetilde{m}}(A) >0$.
By the HTS dichotomy (Theorem~\ref{HTS}), the geodesic flow $\phi_t: T^{1}M \rightarrow T^{1}M $ is ergodic with respect to $\mathfrak{m}$,
while $\mathfrak{L}(A)$ is an invariant set of the geodesic flow on the universal covering manifold with positive measure,
we know that $\mathfrak{L}(A)$ has full $\mathfrak{\widetilde{m}}$-measure.
By the fact that
$$L_{m}(\Gamma)=\left\{c_{v}(+\infty)~|~v\in \bigcap_{A\in \mathfrak{S}} \mathfrak{L}(A)\right\}$$
and the quai-product structure of $\mathfrak{\widetilde{m}}$, the preimage of a positive $\mu_{p}$-measure set under the map
$v\longmapsto c_{v}(+\infty)$ is a positive $\mathfrak{\widetilde{m}}$-measure set, thus $\mu_{p}(L_{m}(\Gamma))=\mu_{p}(L(\Gamma))>0$.
\end{proof}

In his seminal work \cite{My}, Myrberg showed that for Fuchsian groups, the Myrberg limit set $L_{m}(\Gamma)$ has full linear measure in $\mathbb{S}^{1}$
 (in this case, the ideal boundary is homeomorphic to the unit circle). Then Agard in \cite{Aq} generalized this result and prove that,
 for an n-dimensional hyperbolic manifold, the Hausdorff measure gives full measure for $L_{m}(\Gamma)$ in the ideal boundary.
In~\cite{Tu}, Tukia showed that for an n-dimensional hyperbolic manifold,
the Myrberg limit set $L_{m}(\Gamma)$ is a full $\mu_{p}$-measure subset of the conical limit set $L_{c}(\Gamma)$.
An immediately consequence of Theorem~\ref{Myr} and Theorem~\ref{HTS} is the following result.

\begin{corollary}\label{cor}
If the geodesic flow $\phi_t: T^{1}M \rightarrow T^{1}M $ is conservative with respect to the BMS measure $\mathfrak{m}$,
then $\mu_{p}(L_{m}(\Gamma))=\mu_{p}(L_{c}(\Gamma))$.
Thus the first item of Theorem~\ref{Myr} can be replaced by ``The Myrberg limit set $L_{m}(\Gamma)$ has full $\mu_{p}$-measure".
\end{corollary}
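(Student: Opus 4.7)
The plan is to assemble the corollary directly from Theorem~\ref{Myr}, Theorem~\ref{HTS}, and Proposition~\ref{myr}, with essentially no new analytic input. First I would invoke the HTS dichotomy: under the standing hypothesis that $\phi_t$ is conservative with respect to $\mathfrak{m}$, item 1 of Theorem~\ref{HTS} yields $\mu_p(L_c(\Gamma)) = \mu_p(\widetilde{M}(\infty))$. Since $L(\Gamma)$ is the support of any Patterson--Sullivan measure (Theorem~\ref{PS}), one also has $\mu_p(L(\Gamma)) = \mu_p(\widetilde{M}(\infty))$, so $L_c(\Gamma)$ and $L(\Gamma)$ agree up to $\mu_p$-null sets.

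Next I would revisit the $``\Longleftarrow"$ direction of the proof of Theorem~\ref{Myr} and observe that it already delivers the stronger conclusion $\mu_p(L_m(\Gamma)) = \mu_p(L(\Gamma))$, not merely positivity. The argument there runs: conservativity plus the HTS dichotomy forces ergodicity of $\phi_t$; hence for each nonempty $A$ in a countable basis $\mathfrak{S}$ of $\mathrm{supp}(\widetilde{\mathfrak{m}})$, the $\phi_t$-invariant set $\mathfrak{L}(A)$ of orbits returning to $\Gamma$-translates of $A$ is $\widetilde{\mathfrak{m}}$-conull; intersecting over the countable family $\mathfrak{S}$ shows that the set of Myrberg vectors is $\widetilde{\mathfrak{m}}$-conull in $T^1\widetilde{M}$; finally the quasi-product structure of $\widetilde{\mathfrak{m}}$ pushes this full-measure statement forward to $\mu_p(L_m(\Gamma)) = \mu_p(L(\Gamma))$.

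Combining these two inputs, both $L_m(\Gamma)$ and $L_c(\Gamma)$ have full $\mu_p$-measure on $\widetilde{M}(\infty)$, and in particular $\mu_p(L_m(\Gamma)) = \mu_p(L_c(\Gamma))$, which is the first assertion. For the replacement of item 1 in Theorem~\ref{Myr}, the forward direction is exactly what has just been proved, while the reverse direction is already contained in the $``\Longrightarrow"$ direction of Theorem~\ref{Myr}, since ``full $\mu_p$-measure'' is stronger than ``positive $\mu_p$-measure''; in particular, via Proposition~\ref{myr} one has $\mu_p(L_c(\Gamma)) \geq \mu_p(L_m(\Gamma)) > 0$, and the HTS dichotomy then forces conservativity.

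The ``main obstacle'' here is purely bookkeeping; all the substantive ergodic-theoretic work has already been done in Theorems~\ref{HTS} and~\ref{Myr}. The one point deserving a moment's care is to notice that the proof of Theorem~\ref{Myr} produces the equality $\mu_p(L_m(\Gamma)) = \mu_p(L(\Gamma))$ rather than the weaker statement $\mu_p(L_m(\Gamma)) > 0$ recorded in its conclusion, so that it upgrades automatically to the equality $\mu_p(L_m(\Gamma)) = \mu_p(L_c(\Gamma))$ claimed by the corollary.
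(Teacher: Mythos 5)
Your proposal is correct and follows exactly the route the paper intends: the corollary is stated as an immediate consequence of Theorems~\ref{HTS} and~\ref{Myr}, and you rightly note that the proof of the $``\Longleftarrow"$ direction of Theorem~\ref{Myr} already yields $\mu_p(L_m(\Gamma))=\mu_p(L(\Gamma))$, which combined with $\mu_p(L_c(\Gamma))=\mu_p(\widetilde{M}(\infty))=\mu_p(L(\Gamma))$ from the HTS dichotomy gives the equality. No further comment is needed.
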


A unit vector $v \in T^{1}M$ is called a $\mathbf{non}$-$\mathbf{wandering~point}$ for the geodesic flow
$\phi_{t}: T^{1}M \rightarrow T^{1}M$, if there exists a sequence of unit vectors $\{v_{n}\}^{^{\infty}}_{n=1}\subset T^{1}M $
and a sequence of positive real numbers $\{t_{n}\}^{^{\infty}}_{n=1}\subset \mathbb{R}$, such that
$$v_{n}\rightarrow v, ~~~t_{n}\rightarrow +\infty,~~~\phi_{t_{n}}(v_{n})\rightarrow v.$$
The set $$\mathbf{\Omega(\Gamma)}\triangleq \{v\in T^{1}M ~|~v~is~a~non-wandering~point\}$$
is called the $\mathbf{non}$-$\mathbf{wandering~set}$ of the geodesic flow,
and we denote by $\mathbf{\widetilde{\Omega}(\Gamma)}\subset T^{1}\widetilde{M}$ the lift of $\Omega(\Gamma)$ in $T^{1}\widetilde{M}$.

\begin{theorem}\label{unique}
Let $M$ be a complete uniform visibility manifold without conjugate points that satisfies Axiom 2,
and ${\{\mu_q\}}_{q\in\widetilde{M}}$ be be an $r$-dimensional Busemann density,
$p\in\widetilde{M}$ is arbitrarily chosen and $\mathfrak{m}$  is the $r$-dimensional BMS measure defined by $\mu_{p}$.
If $\mathfrak{m}$ is a finite measure on $T^{1}M$, then $r=\delta_{\Gamma}$, and $\Gamma$ is of divergent type,
and there exists a unique (up to s positive scalar multiple) BMS measure, which supported exactly on the non-wandering set $\Omega(\Gamma)$,
and the geodesic flow on $T^{1}M$ is conservative and ergodic with respect to the BMS measure $\mathfrak{m}$ .
\end{theorem}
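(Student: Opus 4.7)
The plan is to derive all four conclusions from a single chain: finiteness of $\mathfrak{m}$ implies conservativity of $\phi_{t}$, which via the HTS dichotomy (Theorem~\ref{HTS}) yields ergodicity together with divergence of the Poincar\'e series, which in turn pins down the exponent and provides the uniqueness input; the support characterization then follows from combining the construction of $\widetilde{\mathfrak{m}}$ with $\Gamma$-duality.

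First I would show $r=\delta_{\Gamma}$ and divergent type. Since $\mathfrak{m}$ is a finite $\phi_{t}$-invariant Borel measure on $T^{1}M$, the Poincar\'e recurrence theorem gives $\mathfrak{m}(M_{D})=0$, so $\phi_{t}$ is conservative with respect to $\mathfrak{m}$. Theorem~\ref{HTS} then immediately yields ergodicity of $\phi_{t}$, divergence of $\sum_{\alpha\in\Gamma}e^{-r\,d(p,\alpha p)}$, and $\mu_{p}(L_{c}(\Gamma))=\mu_{p}(\widetilde{M}(\infty))$. The divergence forces $r\leq\delta_{\Gamma}$, while Proposition~\ref{le1}(2) supplies the reverse inequality $r\geq\delta_{\Gamma}$, so $r=\delta_{\Gamma}$ and $\Gamma$ is of divergent type.

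Next, for uniqueness, let $\mathfrak{n}$ be a BMS measure arising from any $\delta_{\Gamma}$-dimensional Busemann density $\{\nu_{q}\}$. Since the $\delta_{\Gamma}$-Poincar\'e series diverges by the previous step, the Mohsen shadow lemma (Proposition~\ref{shadow}) combined with a Borel-Cantelli argument applied to $\nu_{p}$ (exactly as in the proof of Theorem~\ref{HTS}) shows $\nu_{p}(L_{c}(\Gamma))=\nu_{p}(\widetilde{M}(\infty))$. Both $\mu_{p}$ and $\nu_{p}$ therefore satisfy the hypothesis of Proposition~5.7 of~\cite{LLW}, so they are proportional, and consequently $\mathfrak{n}=c\,\mathfrak{m}$ for some constant $c>0$.

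Finally, for the support equality, the construction of $\widetilde{\mathfrak{m}}$ combined with $\mathrm{supp}(\mu_{p})=L(\Gamma)$ shows that $\mathrm{supp}(\mathfrak{m})$ lifts to exactly those $\widetilde{v}\in T^{1}\widetilde{M}$ with $c_{\widetilde{v}}(\pm\infty)\in L(\Gamma)$. The inclusion $\mathrm{supp}(\mathfrak{m})\subset\Omega(\Gamma)$ is immediate from Poincar\'e recurrence: $\mathfrak{m}$-a.e.\ vector is recurrent and hence non-wandering, and $\mathrm{supp}(\mathfrak{m})$ is a closed flow-invariant set so every point in it is non-wandering. For the reverse inclusion, take a lift $\widetilde{v}$ of $v$ with endpoints $\xi^{\pm}=c_{\widetilde{v}}(\pm\infty)\in L(\Gamma)$; $\Gamma$-duality (Proposition~\ref{prop_2_7}(3)) applied to $(\xi^{-},\xi^{+})$ produces $\{\alpha_{n}\}\subset\Gamma$ with $\alpha_{n}^{-1}p\to\xi^{-}$ and $\alpha_{n}p\to\xi^{+}$, and combining Proposition~\ref{prop_2_7}(1) and (5) converts this into orbit returns of $v$ at a sequence of times $t_{n}\to+\infty$, exhibiting $v$ as non-wandering. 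The hard part is precisely this last step: one must carefully use the uniform visibility axiom to translate boundary convergence of the orbit points $\alpha_{n}p$ into genuine recurrence of $v$ under the geodesic flow in $T^{1}M$, and this is where Proposition~\ref{prop_2_7}(5) does the crucial work of turning angular control at $p$ into metric control along the orbit of $v$.
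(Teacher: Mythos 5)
Your proposal is correct and follows essentially the same chain as the paper: Poincar\'e recurrence gives conservativity, the HTS dichotomy (Theorem~\ref{HTS}) gives ergodicity and divergence of the Poincar\'e series at $r$, Proposition~\ref{le1}(2) pins down $r=\delta_{\Gamma}$, Proposition~5.7 of~\cite{LLW} gives uniqueness, and the support identification reduces to Theorem~\ref{nonwandering} together with the fact that $\mathrm{supp}(\mu_{p})=L(\Gamma)$ (which the paper gets from the minimality statement, Theorem~\ref{mini}). The only differences are expository: you spell out the Borel--Cantelli step needed to apply Proposition~5.7 to an arbitrary competing Busemann density, and you re-derive the ``endpoints in $L(\Gamma)$ implies non-wandering'' half of Theorem~\ref{nonwandering} rather than citing it.
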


In order to prove this Theorem, we need the following result,
which also shows that why the limit set is so important in the dynamics of the geodesic flows.

\begin{theorem}\label{nonwandering}
Let $\widetilde{M}$ be a simply connected uniform visibility manifold without conjugate points,
$\Gamma \subset Iso(\widetilde{M})$ be a discrete subgroup and denote by $M \triangleq \Gamma\backslash \widetilde{M}$,
then $$\widetilde{\Omega}(\Gamma)=\{\widetilde{v} \in T^{1}\widetilde{M}~|~c_{\widetilde{v}}(-\infty)\in L(\Gamma),~c_{\widetilde{v}}(+\infty)\in L(\Gamma)\}.$$
\end{theorem}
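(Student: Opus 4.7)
The plan is to prove the two set inclusions separately. The key ingredients are the $\Gamma$-duality of distinct limit points (Proposition~\ref{prop_2_7}~(3), under the standing non-elementary assumption), the uniqueness of the unit vector at an interior point whose forward ray has a prescribed ideal endpoint (Eberlein's fact recalled just after Proposition~\ref{prop_2_7}), and the joint continuity of the evaluation map $\Psi$ on $T^{1}\widetilde{M}\times[-\infty,\infty]$ from Proposition~\ref{prop_2_7}~(1).

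For the inclusion ``$\supseteq$'', take $\widetilde{v}\in T^{1}\widetilde{M}$ with $\xi\triangleq c_{\widetilde{v}}(+\infty)$ and $\eta\triangleq c_{\widetilde{v}}(-\infty)$ both in $L(\Gamma)$, and write $q=\pi(\widetilde{v})$. By Proposition~\ref{prop_2_7}~(3) the distinct points $\xi$ and $\eta$ are $\Gamma$-dual, so I pick $\{\alpha_{n}\}\subset\Gamma$ with $\alpha_{n}^{-1}q\to\eta$ and $\alpha_{n}q\to\xi$ in the cone topology. Let $\sigma_{n}:[0,s_{n}]\to\widetilde{M}$ be the unique unit-speed geodesic from $q$ to $\alpha_{n}q$ (unique because $\widetilde{M}$ is simply connected without conjugate points, so $\exp_{q}$ is a diffeomorphism), and set $\widetilde{v}_{n}\triangleq\sigma_{n}'(0)$; note that $s_{n}\to+\infty$. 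The convergence $\alpha_{n}q\to\xi$ together with uniqueness of the tangent at $q$ whose ray ends at $\xi$, which is precisely $\widetilde{v}$, forces $\widetilde{v}_{n}\to\widetilde{v}$. Pulling $\sigma_{n}$ back by $\alpha_{n}^{-1}$ yields a geodesic from $\alpha_{n}^{-1}q\to\eta$ to $q$; reversing its parametrization and applying the same uniqueness at $q$ in the direction $\eta$, which corresponds to $-\widetilde{v}$, produces $\alpha_{n}^{-1}\sigma_{n}'(s_{n})\to\widetilde{v}$. Since $\phi_{s_{n}}(\widetilde{v}_{n})=\sigma_{n}'(s_{n})$, the three convergences $\widetilde{v}_{n}\to\widetilde{v}$, $s_{n}\to+\infty$, and $\alpha_{n}^{-1}\phi_{s_{n}}(\widetilde{v}_{n})\to\widetilde{v}$ exhibit the projection of $\widetilde{v}$ to $T^{1}M$ as non-wandering.

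For the inclusion ``$\subseteq$'', let $\widetilde{v}\in\widetilde{\Omega}(\Gamma)$ and lift the non-wandering condition: there exist $\widetilde{v}_{n}\to\widetilde{v}$ in $T^{1}\widetilde{M}$, $t_{n}\to+\infty$, and $\alpha_{n}\in\Gamma$ with $\widetilde{u}_{n}\triangleq\alpha_{n}^{-1}\phi_{t_{n}}(\widetilde{v}_{n})\to\widetilde{v}$. Taking footpoints gives $\alpha_{n}^{-1}c_{\widetilde{v}_{n}}(t_{n})\to q$. Joint continuity of $\Psi$ applied to $(\widetilde{v}_{n},t_{n})\to(\widetilde{v},+\infty)$ yields $c_{\widetilde{v}_{n}}(t_{n})\to\xi$ in the cone topology; since $\alpha_{n}^{-1}$ is an isometry and $\alpha_{n}^{-1}c_{\widetilde{v}_{n}}(t_{n})\to q$, the points $c_{\widetilde{v}_{n}}(t_{n})$ and $\alpha_{n}q$ lie at vanishing distance, and the standard fact that a bounded-distance perturbation of a sequence converging in the cone topology preserves the limit forces $\alpha_{n}q\to\xi$, so $\xi\in L(\Gamma)$. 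For $\eta$, observe $c_{\widetilde{u}_{n}}(-t_{n})=\alpha_{n}^{-1}\pi(\widetilde{v}_{n})$; applying joint continuity to $(\widetilde{u}_{n},-t_{n})\to(\widetilde{v},-\infty)$ yields $\alpha_{n}^{-1}\pi(\widetilde{v}_{n})\to\eta$, and the same bounded-distance argument with $\pi(\widetilde{v}_{n})\to q$ produces $\alpha_{n}^{-1}q\to\eta$, so $\eta\in L(\Gamma)$. The main obstacle I anticipate is the careful bookkeeping of the cone-topology convergences in this direction: one has to pass cleanly from convergence of tangent vectors in $T^{1}\widetilde{M}$ to cone-topology convergence of footpoints via Proposition~\ref{prop_2_7}~(1), and then transfer the convergence of an auxiliary sequence to the intended $\Gamma$-orbit points; once this is set up, everything else is a direct application of $\Gamma$-duality and Eberlein's uniqueness.
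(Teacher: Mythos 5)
Your proposal is correct and follows essentially the same route as the paper: $\Gamma$-duality of the two endpoints plus cone-topology convergence of initial tangent vectors for ``$\supseteq$'' (your geodesics from $q$ to $\alpha_n q$ are just the $\alpha_n$-translates of the paper's geodesics from $\alpha_n^{-1}q$ to $q$, so they project to the same sequence in $T^1M$), and continuity of $\Psi$ combined with the bounded-distance/uniform-visibility perturbation argument for ``$\subseteq$''. The only point worth noting is that both your argument and the paper's invoke Proposition~\ref{prop_2_7}(3), which requires $\Gamma$ to be non-elementary, an hypothesis you rightly flag but which the theorem statement omits.
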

\begin{proof}
First, we show that the relation ``$\subseteq$" holds.

Suppose $v\in \Omega(\Gamma) \subseteq T^{1}M$, by definition, there exists a sequence of unit vectors $\{v_{n}\}^{^{\infty}}_{n=1}\subset T^{1}M $
and a sequence of numbers $\{t_{n}\}^{^{\infty}}_{n=1}\subset \mathbb{R}$, such that
\begin{equation}\label{eq2.8}
		v_{n}\rightarrow v, ~~~t_{n}\rightarrow +\infty,~~~\phi_{t_{n}}(v_{n})\rightarrow v.
\end{equation}
Let $\widetilde{v}\in T^{1}\widetilde{M}$ be a lift of $v$, then by~\eqref{eq2.8}, for each $n\in \mathbb{N}^{+}$,
there exists $\widetilde{v}_{n}\in T^{1}\widetilde{M}$, a lift of $v_{n}$ in $T^{1}\widetilde{M}$, and
$\{\alpha_{n}\}_{n=1}^{\infty}\subset \Gamma$, such that
\begin{equation}\label{eq2.9}
		\widetilde{v}_{n}\rightarrow \widetilde{v},~~~\alpha_{n}\circ c'_{\widetilde{v}_{n}}(t_{n})\rightarrow \widetilde{v}.
\end{equation}
Denote by $q \triangleq c_{\widetilde{v}}(0)$ and $q_{n} \triangleq c_{\widetilde{v}_{n}}(0)$ for each $n\in \mathbb{N}^{+}$,
thus $$d(c_{\widetilde{v}_{n}}(t_{n}),\alpha^{-1}_{n}(q))=d(\alpha_{n}c_{\widetilde{v}_{n}}(t_{n}),q)\rightarrow 0.$$
Thus by Proposition~\ref{prop_2_7}(1), we have that $\alpha^{-1}_{n}(q)\rightarrow c_{\widetilde{v}}(+\infty)$ in the cone topology,
so $c_{\widetilde{v}}(+\infty)\in L(\Gamma)$.

Now we only need to show that $c_{\widetilde{v}}(-\infty)\in L(\Gamma)$. For this, we denote by
$$\gamma_{n}(t) \triangleq \alpha_{n}\circ c_{\widetilde{v}_{n}}(t_{n}-t),~~~t\in \mathbb{R}.$$
Then by~\eqref{eq2.9}, the tangent vector of geodesic $\gamma_{n}$ at $t=0$ satisfies
$$\gamma'_{n}(0) = -\alpha_{n}\circ c'_{\widetilde{v}_{n}}(t_{n})\rightarrow -\widetilde{v}.$$
Furthermore by Proposition~\ref{prop_2_7}(1), we have
$$\alpha_{n}(q_{n})=\alpha_{n}\circ c'_{\widetilde{v}_{n}}(t_{n}-t_{n})=\gamma_{n}(t_{n})\rightarrow
c_{-\widetilde{v}}(+\infty) = c_{\widetilde{v}}(-\infty).$$
Since $q_{n}\rightarrow q$, the uniform visibility axiom implies that
$$\alpha_{n}(q)\rightarrow c_{\widetilde{v}}(-\infty)~\Rightarrow~c_{\widetilde{v}}(-\infty)\in L(\Gamma).$$
Therefore we have proved the ``$\subseteq$" part.

Next, let's prove the ``$\supseteq$" part.

Suppose that $\widetilde{v} \in T^{1}\widetilde{M}$ satisfies that $c_{\widetilde{v}}(-\infty)\in L(\Gamma),~c_{\widetilde{v}}(+\infty)\in L(\Gamma)$.
By Proposition~\ref{prop_2_7}(3), we know that $c_{\widetilde{v}}(-\infty)$ and $c_{\widetilde{v}}(+\infty)$ are $\Gamma$-dual,
thus there exist $\{\alpha_{n}\}^{+\infty}_{n=1}\subseteq \Gamma$, such that
$$\alpha^{-1}_{n}(p)\rightarrow c_{\widetilde{v}}(-\infty),~~~ \alpha_{n}(p)\rightarrow c_{\widetilde{v}}(+\infty)~~~\forall p\in \widetilde{M}.$$
Denote by $q \triangleq c_{\widetilde{v}}(0)$, then by the definition of cone topology, we have
$$t_{n} \triangleq d(q,\alpha^{-1}_{n}(q))\rightarrow +\infty.$$
Let $\widetilde{v}_{n}\triangleq c'_{\alpha_{n}^{-1}(q),q}(0)$, so $\phi_{t_{n}}(\widetilde{v}_{n})=-c'_{q,\alpha_{n}^{-1}(q)}(0)$.
Since $\alpha^{-1}_{n}(q)\rightarrow c_{\widetilde{v}}(-\infty)$, without loss of generality, we can assume that
$$\measuredangle_{q}(\alpha^{-1}_{n}(q),c_{\widetilde{v}}(-\infty)) < \frac{1}{n},~~~n\in \mathbb{N}^{+}.$$
Thus
$$\measuredangle_{q}(\phi_{t_{n}}(\widetilde{v}_{n}),\widetilde{v})=
\measuredangle_{q}(\alpha^{-1}_{n}(q),c_{\widetilde{v}}(-\infty)) < \frac{1}{n},~~~n\in \mathbb{N}^{+},$$
therefore
\begin{equation}\label{eq2.10}
		\phi_{t_{n}}(\widetilde{v}_{n})\rightarrow \widetilde{v}.
\end{equation}
Since $\alpha_{n}(q)\rightarrow c_{\widetilde{v}}(+\infty)$, we know that
\begin{equation}\label{eq2.11}
		\alpha_{n}(\widetilde{v}_{n})=c'_{q,\alpha_{n}(q)}(0)\rightarrow \widetilde{v}.
\end{equation}
Let $\mathcal{C}:\widetilde{M}\rightarrow M$ be the covering map, then by~\eqref{eq2.11}, we have
$$d\mathcal{C}(\widetilde{v}_{n})=d\mathcal{C}(\alpha_{n}\circ \widetilde{v}_{n}) \rightarrow d\mathcal{C}(\widetilde{v}).$$
Denote by $v\triangleq d\mathcal{C}(\widetilde{v})$ and $v_{n}\triangleq d\mathcal{C}(\widetilde{v}_{n})$, then by~\eqref{eq2.10}
$$\phi_{t_{n}}(v_{n})=d\mathcal{C}(\phi_{t_{n}}\widetilde{v}_{n})\rightarrow d\mathcal{C}(\widetilde{v})=v,$$
thus $v\in \Omega(\Gamma)$, therefore $\widetilde{v}\in \widetilde{\Omega}(\Gamma)$. We proved the ``$\supseteq$" part.
\end{proof}

\noindent\emph{Proof of Theorem \ref{unique}}.
Since $\mathfrak{m}$ is a finite $\phi_{t}$-invariant measure, by the Poincar\'e recurrence theorem,
the geodesic flow $\phi_t$ is conservative with respect to the BMS measure $\mathfrak{m}$.
Then the HTS dichotomy (Theorem~\ref{HTS}) implies that the Poincar$\acute{e}$ series is divergent at $r$,
while Proposition~\ref{le1} shows that $r \geqslant \delta_{\Gamma}$, thus $r=\delta_{\Gamma}$, and $\Gamma$ is of divergent type.

By the HTS dichotomy, we know that $\phi_t$ is ergodic with respect to $\mathfrak{m}$, and the conical limit set $L_{c}(\Gamma)$
has full $\mu_{p}$-measure, thus by the Proposition 5.7 in~\cite{LLW}, the Patterson-Sullivan measure is unique up to s positive scalar multiple,
thus the BMS measure $\mathfrak{m}$ corresponding to the Patterson-Sullivan measure is unique.
$\mathrm{supp} (\mathfrak{m})=\Omega(\Gamma)$ follows easily by Theorem~\ref{mini} and Theorem~\ref{nonwandering}.

\section{\bf Some Related Questions}\label{question}
\setcounter{equation}{0}\setcounter{theorem}{0}

As mentioned in the introduction, on the one hand, the absence of conjugate points provides (almost) no useful information on the local geometry of the manifold;
on the other hand, in many situations, we need to do a series of estimations to get conclusions.
For this, we add the condition of uniform visibility.

It's well known that the Myrberg limit points are conical limit points for negatively curved manifolds.
In Proposition~\ref{myr}, we have showed that this is also true for the manifolds without conjugate points, under the condition of uniform visibility.

\begin{question}\label{q1}
Let $M=\Gamma \backslash\widetilde{M}$ be a complete Riemannian manifolds without conjugate points, do we have $L_{m}(\Gamma) \subset L_{c}(\Gamma)$?
\end{question}

Theorem~\ref{nonwandering} gives the relationship between the limit set and the non-wandering set of the geodesic flow.

\begin{question}\label{q2}
Does Theorem~\ref{nonwandering} remains true if we remove the hypothesis ``uniform visibility"?
\end{question}

For a long time, the ergodicity of geodesic flows on the closed surfaces of non-positive curvature (and surfaces without focal points, surfaces without conjugate points)
 with respect to the Liouville measures is a major open problem. Together with Weisheng Wu and Fang Wang (cf. \cite{Wu1},~\cite{WLW}),
we have solved this problem for surfaces without focal points of genus greater than $1$,
under the assumption that the set of points of the surface with negative curvature has at most finitely many connected components.
Recently, Weisheng Wu (cf. \cite{Wu1}) has made significant progress on surfaces without conjugate points under the assumption ``bounded asymptote".
We note that such surfaces are always uniform visibility (cf. \cite{Eb1}).
One of the difficulties to this problem is caused by flat stripes,
thus we have the following question.

\begin{question}\label{q3}
For a closed surface without conjugate points with genus greater than $1$,
if we further assume that the surface satisfies the Axiom $2$, is the geodesic flow ergodic with respect to the Liouville measure?
\end{question}

Although in this paper, the manifold $M$ we considered is not necessarily compact, we ask the following question about compact manifold without conjugate points.

\begin{question}\label{q4}
For a compact manifold without conjugate points and dimension greater than $2$,  are the visibility and uniform visibility axioms equivalent?
\end{question}

It is known that for the compact manifolds of non-positive curvature and closed surface without conjugate points,
the answers to the Question~\ref{q4} are ``Yes".

\section*{\textbf{Acknowledgements}}
The author is partially supported by Natural Science Foundation of Shandong Province under Grant No.~ZR2020MA017 and No.~ZR2021MA064.

\end{document}